\definecolor {processblue}{cmyk}{0.96,0,0,0}
\definecolor{codegreen}{rgb}{0,0.6,0}
\definecolor{codegray}{rgb}{0.5,0.5,0.5}
\definecolor{codepurple}{rgb}{0.58,0,0.82}
\definecolor{backcolour}{rgb}{0.95,0.95,0.92}
\lstdefinestyle{mystyle}{
	backgroundcolor=\color{backcolour},   
	commentstyle=\color{codegreen},
	keywordstyle=\color{blue},
	numberstyle=\tiny\color{codegray},
	stringstyle=\color{codepurple},
	basicstyle=\ttfamily\footnotesize,
	breakatwhitespace=false,         
	breaklines=true,                 
	captionpos=b,                    
	keepspaces=true,                 
	numbers=left,                    
	numbersep=5pt,                  
	showspaces=false,                
	showstringspaces=false,
	showtabs=false,                  
	tabsize=2
}
\newcommand{\ceil}[1]{\left\lceil{#1}\right\rceil}
\newtheorem{theorem}{Theorem}[section]
\newtheorem{lemma}[theorem]{Lemma}
\newtheorem{proposition}[theorem]{Proposition}
\newtheorem{corollary}[theorem]{Corollary}
\newtheorem{conjecture}[theorem]{Conjecture}
\theoremstyle{definition}
\newtheorem{definition}[theorem]{Definition}
\theoremstyle{remark}
\begin{document}
\large
\title{\vspace{-2cm} Regular $K_3$-irregular graphs}
	
\author[1]{Artem Hak \thanks{Corresponding author: artikgak@ukr.net}}
\author[2]{Sergiy Kozerenko}
\author[3]{Andrii Serdiuk}

\affil[1]{\footnotesize National University of Kyiv-Mohyla Academy, 2 Skovorody Str., 04070 Kyiv, Ukraine}

\affil[1,2]{\footnotesize Kyiv School of Economics, 3 Mykoly Shpaka Str., 03113 Kyiv, Ukraine}

\affil[3]{\footnotesize McGill University, Montreal, Quebec H3A 0G4, Canada}

	\date{\vspace{-5ex}}
	
	\maketitle
	
	\begin{abstract}
		We address the problem proposed by Chartrand, Erd\H{o}s and Oellermann (1988) about the existence of regular $K_3$-irregular graphs. We first establish bounds on the $K_3$-degrees of such graphs and use them to prove that there are no such graphs with regularities at most $7$. For the regularity $8$, we narrow down the bounds on the order of such graphs to six possible values. We then present an explicit example of a $9$-regular $K_3$-irregular graph. Finally, we discuss an evolutionary algorithm developed to discover more examples of $r$-regular $K_3$-irregular graphs for consecutive values $r \in \{9, \dots, 30\}$.
	\end{abstract}
	{ \small
	{\bf Keywords:} vertex degree; triangle-distinct graph; irregular graph; regular graph; evolutionary algorithm.\\
	{\bf MSC 2020:} 05C07, 05C99, 68T20.
	}

\section{Introduction}

A graph is called regular if all its vertices have the same degree. In contrast, constructing a graph in which all degrees are distinct is impossible (except for the trivial case of a one-vertex graph). Nevertheless, other approaches have been developed to study the question of how irregular a graph can be. One such approach uses the notion of the $F$-degree introduced by Chartrand, Holbert, Oellermann, and Swart~\cite{Char:87}. This concept generalizes the classical vertex degree: given two graphs $G$ and $F$, the $F$-degree of a vertex $v$ in $G$ is defined as the number of subgraphs of $G$ that are isomorphic to $F$ and contain $v$. A graph $G$ is said to be $F$-irregular if all its vertices have distinct $F$-degrees.

In the seminal paper~\cite{Char-Erd-Oell:88}, Chartrand, Erd\H os, and Oellermann posed the problem of whether regular $K_3$-irregular graphs exist. The difficulty of the problem lies in the tension between global uniformity and local asymmetry: every vertex must have the same degree, but participate in a different number of triangles. This problem remained open for decades.

The study of triangle-degrees continued in works of Nair and Vijayakumar, where they investigated the relation between the triangle-degrees of a vertex in a graph and its complement~\cite{Nair:94} and studied edge triangle-degrees~\cite{Nair:96}. 
More recently, Berikkyzy et al.~\cite{dist-triangle:2024} presented the smallest possible regular $K_3$-irregular graph, gave bounds on triangle-degrees of graphs and restated the question if regular triangle-irregular graphs exist.

In 2024, the first regular $K_3$-irregular graphs were discovered by Stevanovi\'c et al.~\cite{reg-triangle:24}. They found examples of regular $K_3$-irregular graphs for regularities $r \in \{10,11,12\}$ using a mix of heuristic search and structural insights. However, smaller regularities remained elusive.

In this paper, we make several contributions to this line of research. We prove that regular $K_3$-irregular graphs do not exist for regularities $r \leq 7$. Next, we explore the critical case of $r=8$ and establish lower and upper bounds on the order of these graphs, narrowing it down to six possible values. Finally, we present the first known example of a $9$-regular $K_3$-irregular graph (see Figure~\ref{9-reg-k3-irreg-figure} and Table~\ref{tab:9reg-k3-degrees-neighbors}). Using an evolutionary algorithm, we discovered examples of regular $K_3$-irregular graphs for every regularity from $9$ up to $30$.

While evolutionary algorithms have a long history in optimization and applied mathematics~\cite{genetic-survey}, their application to constructive problems in pure mathematics, and combinatorics in particular, remains relatively rare. Only a handful of studies have explored the use of such techniques to support or refute mathematical conjectures.
For example, Miasnikov~\cite{Miasnikov:99} utilized genetic algorithms to investigate Andrews--Curtis conjecture about balanced co-representation of trivial groups. He proved that it holds for the potential counterexamples (known at that time). More recently, Wagner~\cite{AdamWagner:21} applied reinforcement learning to discover counterexamples to several conjectures in spectral graph theory and extremal combinatorics. This line of research continues: Wagner and collaborators~\cite{wagner2025multiagent} introduced a multi-agent AI system capable of constructing complex geometric structures such as polytopes --- further demonstrating the potential of such methods in pure mathematics.

The paper is organized as follows. Section~\ref{sect-defs-prelim} introduces the necessary definitions and preliminary results.
Section~\ref{sect-bounds} develops a general technique used in subsequent proofs and establishes both the lower and upper bounds on the graph parameters.
Section~\ref{sect-small-regs} provides an analytical proof that no $r$-regular $K_3$-irregular graphs exist for $r \leq 7$.
Section~\ref{sect-8-reg} investigates the case $r=8$ and derives bounds on the possible order of such graphs.
In Section~\ref{sect-9-reg-plus}, we present an example of a regular $K_3$-irregular graph for $r=9$ and discuss its structural properties.
Section~\ref{sect-algorithm} discusses the implementation details of the evolutionary search algorithm.

We note that some results of this paper were announced at Ukraine Mathematics Conference “At the End of the Year 2024”~\cite{HakEndOfYear:24}.
 
\section{Main definitions and preliminary results}\label{sect-defs-prelim}

In this section, we introduce the key concepts related to $F$-degrees and $F$-irregular graphs, which will be used throughout the paper. We also state preliminary results on the properties of $K_3$-degrees and their behavior in the graph complement.

\subsection{Main definitions}

A graph is an ordered pair $G = (V, E)$ where $V = V(G)$ is the set of its \textit{vertices} and $E = E(G) \subset \binom{V}{2}$ is the set of its \textit{edges}. All the graphs considered in this paper are simple and finite. Also, for a pair of vertices $u, v \in V(G)$, the edge $\{u, v\}$ will be denoted as $uv$. By $\overline{G}$, we denote the \textit{complement} of a graph $G$. We use $K_n$ and $P_n$ to denote the complete graph and the path on $n$ vertices, respectively. 

For two sets of vertices $A, B \subset V(G)$, by $E(A, B)$ we denote the set of edges between $A$ and $B$. 

The \textit{neighborhood} of a vertex $v$ in a graph $G$ is the set of all its adjacent vertices: $N_G(v) = \{u \in V(G) \mid uv \in E(G)\}$. The \textit{closed neighborhood} of $v$ is the set $N_G[v] = N_G(v) \cup \{v\}$. The \textit{degree} of $v$ in $G$ is the number $\deg_G(v) = |N_G(v)|$. A vertex $v \in V(G)$ is an \textit{isolated vertex} provided $\deg_G(v) = 0$. The graph $G$ is said to be \textit{regular} if all its vertices have the same degree, which is called the \textit{regularity} of $G$. 

Two vertices $u$ and $v$ are said to be \textit{false twins} if $N_G(u) = N_G(v)$ and $uv \notin E(G)$. If instead $N_G[u] = N_G[v]$, then $u$ and $v$ are called \textit{twins}, or \textit{true twins}.

Two graphs $G$ and $H$ are called \textit{isomorphic} if there is an \textit{isomorphism} between them, that is a bijection $f: V(G) \rightarrow V(H)$ such that $uv \in E(G)$ if and only if $f(u)f(v) \in E(G)$.

For a set of vertices $A \subset V(G)$, by $G[A]$ we denote the subgraph induced by $A$. We also put $E(A)=E(G[A])$. 

Let $u_1$, $v_1$ and $u_2$, $v_2$ be four distinct vertices in $G$ with $u_1v_1$, $u_2v_2$ $\in E(G)$ and $u_1u_2$, $v_1v_2$ $\notin E(G)$. The \textit{$2$-switch} operation on these edges produces a graph obtained from $G$ by removing the edges $u_1v_1$, $u_2v_2$ and adding new edges $u_1u_2$, $v_1v_2$.

\subsection{F-irregular graphs}

\begin{definition}[\cite{Char-Erd-Oell:88, Char:87}]
For a given graph $F$, the \textit{$F$-degree} of a vertex $v$ in~$G$ is the number $F \deg(v)$ of subgraphs of $G$, isomorphic to $F$, to which $v$ belongs.
\end{definition}

Note that the ordinary degree of a vertex is exactly its $K_2$-degree.

\begin{definition}[\cite{Char-Erd-Oell:88, Char:87}]
A graph $G$ is called \textit{$F$-irregular} if all its vertices $G$ have distinct $F$-degrees.
\end{definition}

Figure~\ref{fig-1} illustrates the smallest possible $K_3$-irregular graph. It has $7$ vertices, $15$ edges, degree sequence $(6, 5, 5, 4, 4, 3, 3)$, and the corresponding $K_3$-degree sequence $(9, 7, 6, 5, 4, 3, 2)$. The graph was found by the computer search in~\cite{dist-triangle:2024}.

\begin{figure}
    \centering
    \begin{tikzpicture}[auto,on grid, state/.style ={circle, top color =black , bottom color = black , draw, minimum width=1mm}, inner sep=1.5pt, scale = 1]

	\node[state, label=left:$v_4$] (v4) {};
	\node[state, label=below right:$v_5$] (v5) [below = 15mm of v4] {};
	\node[state, label=above right:$v_2$] (v2) [right = 15mm of v5] {};
	\node[state, label=above right:$v_7$] (v7) [below = 15mm of v5] {};
    \node[state, label=below:$v_3$] (v3) [right = 15mm of v7] {};
    \node[state, label=below left:$v_1$] (v1) [below left = 15mm of v7] {};
    \node[state, label=below right:$v_6$] (v6) [below right = 15mm of v3] {};
    
    \draw (v1) .. controls (-2, 2) and (1.5, 2) .. (v2);

    \foreach \i in {3,...,7}
        \draw (v1)--(v\i);

    \foreach \i in {3,...,6}
        \draw (v2)--(v\i);

	\draw (v3) -- (v4);
    \draw (v3) -- (v6);
    \draw (v3) -- (v7);

    \draw (v4) -- (v5);

    \draw (v5) -- (v7);
			
    \end{tikzpicture}
    \caption{The smallest $K_3$-irregular graph~\cite{dist-triangle:2024}.}\label{fig-1}
\end{figure}
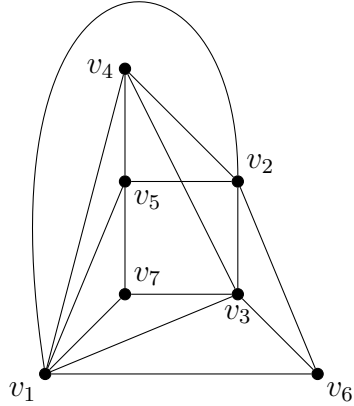

Many of our proofs involve analyzing the relationship between \textit{triangle-degrees} (\( K_3 \)-degrees) in a graph and its complement. The following proposition provides an explicit formula for computing the \( K_3 \)-degree of a vertex in the complement of a graph.

\begin{proposition}[{\cite[Corollary 2.5]{Nair:94}}]~\label{K3-complement}
    If $G$ is an $r$-regular graph with $n$ vertices, then
    
    \[K_3 \deg_{\overline{G}}(u) = \binom{n - 1}{2} - \frac{3}{2}r(n - r - 1) - K_3 \deg_G(u).\]
\end{proposition}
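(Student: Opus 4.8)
The plan is to reinterpret both $K_3$-degrees as edge counts and then use regularity together with degree-sum bookkeeping to connect them. Fix a vertex $u$ and set $N := N_G(u)$ and $M := V(G)\setminus N_G[u]$, so that $|N| = r$ and $|M| = n-1-r$. A triangle of $G$ through $u$ is determined by an edge of $G$ joining two neighbors of $u$, so $K_3\deg_G(u) = |E(N)|$. Dually, a triangle of $\overline{G}$ through $u$ is determined by a pair of vertices $v,w$ that are both non-adjacent to $u$ in $G$ and non-adjacent to each other in $G$; hence $K_3\deg_{\overline{G}}(u)$ equals the number of non-edges of $G$ inside $M$. Since there are $\binom{n-1-r}{2}$ pairs inside $M$, this gives the identity
\[
K_3\deg_{\overline{G}}(u) = \binom{n-1-r}{2} - |E(M)|,
\]
and the problem reduces to expressing $|E(M)|$ in terms of $n$, $r$, and $|E(N)|$.

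To compute $|E(M)|$ I would sum degrees over $N$ and over $M$ separately and exploit $r$-regularity. Each of the $r$ vertices of $N$ has degree $r$, and this degree sum splits into the $r$ edges joining $N$ to $u$, twice the edges inside $N$, and the edges between $N$ and $M$; thus $r^2 = r + 2|E(N)| + |E(N,M)|$, so $|E(N,M)| = r(r-1) - 2|E(N)|$. Summing degrees over $M$ instead, no vertex of $M$ is adjacent to $u$, so $r(n-1-r) = 2|E(M)| + |E(N,M)|$. Substituting the value of the cross term and solving yields
\[
|E(M)| = \tfrac{1}{2}r(n-2r) + |E(N)|.
\]

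The final step is to substitute this into the displayed identity and, using $|E(N)| = K_3\deg_G(u)$, to verify the algebraic identity
\[
\binom{n-1-r}{2} - \tfrac{1}{2}r(n-2r) = \binom{n-1}{2} - \tfrac{3}{2}r(n-r-1),
\]
both sides of which expand to $\tfrac{1}{2}\bigl(n^2 - 3nr + 3r^2 - 3n + 3r + 2\bigr)$; this is exactly the claimed formula. I expect no conceptual difficulty here. The only real hazard is the bookkeeping in the degree-sum argument: keeping straight which edge classes are counted once (those meeting $u$ or crossing between $N$ and $M$) versus twice (those internal to $N$ or to $M$), and avoiding sign slips when eliminating $|E(N,M)|$. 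A cleaner but essentially equivalent route would be to classify every pair in $\binom{V(G)\setminus\{u\}}{2}$ by its adjacency pattern to $u$ and balance the counts using regularity, but the two-set degree-sum computation above is the most direct.
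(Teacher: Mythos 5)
Your proof is correct: the identifications $K_3\deg_G(u)=|E(N)|$ and $K_3\deg_{\overline{G}}(u)=\binom{n-1-r}{2}-|E(M)|$ are right, the degree-sum bookkeeping gives $|E(M)|=\tfrac12 r(n-2r)+|E(N)|$, and the final algebraic identity checks out (both sides equal $\tfrac12(n^2-3nr+3r^2-3n+3r+2)$). The paper does not prove this proposition at all --- it is imported as Corollary~2.5 of Nair--Vijayakumar --- but your edge-count decomposition is exactly the ``partitioning technique'' the paper develops in Section~3.1 for its own arguments (your $|E(M)|$ is their $|E(B)|=\tfrac{nr}{2}-r^2+d$), so your argument is a clean, self-contained derivation in the same spirit as the rest of the paper.
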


The following result is an immediate consequence of Proposition~\ref{K3-complement} as the complement of any $r$-regular graph with $n$ vertices is $(n-r-1)$-regular. 

\begin{corollary}\label{reg-k3-irreg-complement}
    The complement of a regular $K_3$-irregular graph is itself a regular $K_3$-irregular graph.
\end{corollary}

The following result is an obvious generalization of the classical Handshaking Lemma to subgraph-based $F$-degrees.

\begin{lemma}
For any graph $G$, it holds
\begin{equation}\label{fdegs-sum}
\sum_{v \in V(G)} F \deg(v) = |V(F)| \cdot N(G, F),
\end{equation}
where $N(G,F)$ denotes the number of subgraphs in $G$ isomorphic to $F$.
\end{lemma}

\begin{corollary}~\label{triangle-shake-lemma}
The total sum of $K_3$-degrees over all vertices of any graph is divisible by $3$.
\end{corollary}

\section{Establishing bounds}\label{sect-bounds}

\subsection{The partitioning technique}\label{part-tech}

In this section, we introduce the \textit{partitioning technique} that will be abundantly used in further proofs.

Let $G$ be an $r$-regular $K_3$-irregular graph with $n$ vertices. Fix an arbitrary vertex $v \in V(G)$, and denote its triangle-degree by $d = K_3 \deg(v)$. Let $A=N(v)$ be its neighborhood, and let $B = V(G) \setminus N[v]$ denote all the remaining vertices. Thus, the vertex set $V(G)$ is partitioned into three parts: $V(G) = \{v\} \sqcup A \sqcup B$. 
And the edge set $E(G)$ is naturally divided into four parts:
\[E(G) = E(\{v\},A) \sqcup E(A) \sqcup E(A,B) \sqcup E(B).\]

Now, let us calculate the number of edges in each subset.
Consider the induced subgraph $G[A]$. Clearly, $|A| = r$ and $|E(A)| = d$. 
Since the graph $G$ is $r$-regular, by formula (\ref{fdegs-sum}), we have 
\[\sum_{v \in A} \deg(v) = |A| \cdot r = r^2.\] 
Next, we can calculate the number of edges between $A$ and $B$ by subtracting the edges incident to $v$ and those within $A$.

\begin{equation}
|E(A, B)| = r^2 - r - 2d = r(r-1) - 2d.
\end{equation}

From this, we can express the number of vertices and edges in $B$:
\begin{align}
    &|B| = n - r - 1, \label{vert-in-B}\\
    &|E(B)| =\frac{nr}{2} - r - d - (r(r-1) - 2d) = \frac{nr}{2} - r^2 + d \label{edg-in-B}.
\end{align}

See Figure~\ref{fig-2} for the visualization of the partitioning technique.

\begin{figure}[ht]
	\begin{tikzpicture}[auto,on grid, state/.style ={circle, top color =black , bottom color = black , draw, minimum width=1mm}, inner sep=1.5pt, scale = 1]

			\node[state, label={[xshift=-5pt]left:$v$}] (1) at (0, 0) {};
			
			\node[below left = 0.9cm and 0.35cm of 1] {\small $d = K_3 \deg(v)$};

			\draw[line width=0.5mm] (4, 0) ellipse (2cm and 2.5cm);
			\node[anchor=center] at (4, 0.5) {\small $|A| = r$};
			\node[anchor=center] at (4, -0.5) {\small $|E(A)| = d$};
			
			\node[below=1.25cm of 1] at (4, 4.5) {\large $A$};
			
			\draw[line width=0.5mm] (13, 0) ellipse (2cm and 2.5cm);
			\node[anchor=center] at (13, 0.5) {\small $|B| = n - r - 1$};
			\node[anchor=center] at (13, -0.5) {\small $|E(B)| = \frac{nr}{2} - r^2 + d$};
			
			\node[below=1.25cm of 1] at (13, 4.5) {\large $B$};

                \draw[line width=0.5mm] (6, -0.5) -- (11, -0.5);
			\node[] at (8.5, 0) {\large $\ldots$};
			\draw[line width=0.5mm] (6, 0.5) -- (11, 0.5);
			
			\node[] at (8.5, -1.5) {\small $|E(A,B)| = r(r - 1) - 2d$};
			
			\coordinate (T1) at (2.76, 1.95);
			\coordinate (T2) at (2.76, -1.95);
			\draw[line width=0.5mm] (1) -- (T1);
			\draw[line width=0.5mm] (1) -- (T2);
    \end{tikzpicture}
    \caption{The partitioning technique for $r$-regular $K_3$-irregular graphs.}\label{fig-2}
\end{figure}
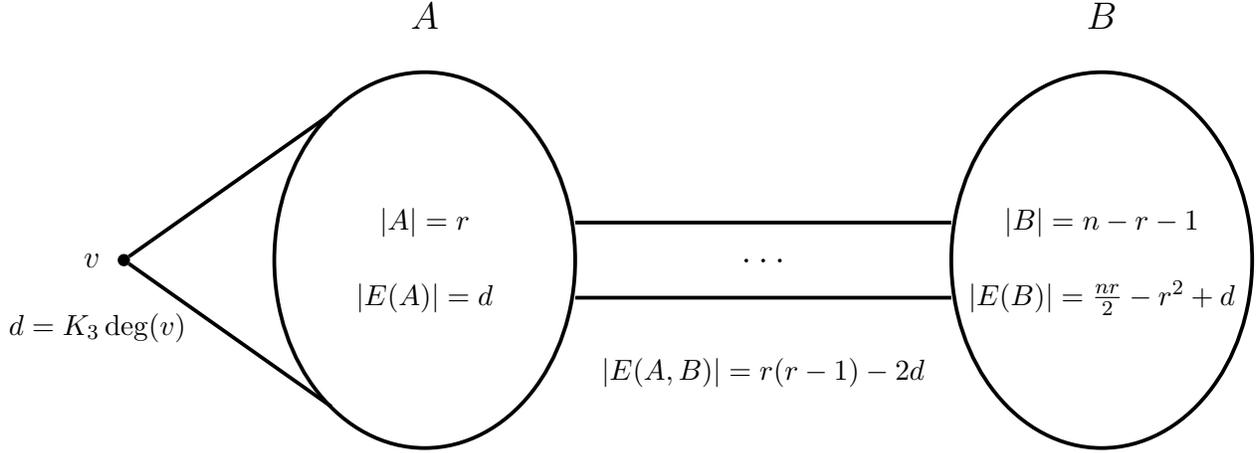

In the following results from this section, we always assume that $G$ is an $r$-regular $K_3$-irregular graph with subsets $A$ and $B$ defined as above. Now, we establish several structural properties in this setting. 

\begin{lemma}~\label{bounds-on-degs-in-A}
    For any vertex $a \in A$, it holds \[\deg_{G[A]}(a) \leq \min\{r-2, d\}.\]
\end{lemma}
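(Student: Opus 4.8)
The plan is to establish the two inequalities $\deg_{G[A]}(a) \le d$ and $\deg_{G[A]}(a) \le r-2$ independently; their conjunction is exactly the claimed bound. The first is essentially a restatement of the partitioning data. Each neighbour $a'$ of $a$ lying in $A$ is also adjacent to $v$ (since $A = N(v)$), so $\{v, a, a'\}$ is a triangle through $v$; distinct choices of $a'$ yield distinct such triangles, all of them counted among the $K_3 \deg(v) = |E(A)| = d$ triangles at $v$. Hence $\deg_{G[A]}(a) \le d$.

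For the inequality $\deg_{G[A]}(a) \le r-2$ I would start from the trivial bound $\deg_{G[A]}(a) \le r-1$, which holds because $\deg_G(a) = r$ while the edge $av$ leaves $A$, so at most $r-1$ of $a$'s edges stay inside $A$. It then suffices to rule out the extremal case $\deg_{G[A]}(a) = r-1$. Suppose it occurs. Then $a$ is adjacent to $v$ and to every vertex of $A \setminus \{a\}$, and has no neighbour in $B$, so $N(a) = \{v\} \cup (A \setminus \{a\})$. Counting the edges of $G[N(a)]$, which is precisely $K_3 \deg(a)$, I would add the $r-1$ edges joining $v$ to $A \setminus \{a\}$ (all present, as $A = N(v)$) to the edges lying wholly inside $A \setminus \{a\}$, of which there are $|E(A)| - \deg_{G[A]}(a) = d - (r-1)$. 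This gives $K_3 \deg(a) = (r-1) + (d - (r-1)) = d = K_3 \deg(v)$, which contradicts the $K_3$-irregularity of $G$ since $a \neq v$. Therefore $\deg_{G[A]}(a) \le r-2$.

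The one genuinely non-obvious point, and hence the main obstacle, is the exclusion of the case $\deg_{G[A]}(a) = r-1$: the gain of the sharper constant $r-2$ over the trivial $r-1$ comes not from an edge-counting argument but from the global irregularity hypothesis. The key realisation is that saturating $a$ inside $A$ forces its neighbourhood to coincide with $\{v\} \cup (A \setminus \{a\})$, making the triangle count at $a$ collapse to exactly that at $v$; once this structural coincidence is spotted, the remaining verification is routine bookkeeping of the edges within $A$.
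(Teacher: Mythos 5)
Your proof is correct and follows essentially the same route as the paper: both obtain $\deg_{G[A]}(a)\le d$ from $|E(A)|=d$ and exclude the extremal case $\deg_{G[A]}(a)=r-1$ by observing that it forces $N[a]=N[v]$, whence $K_3\deg(a)=K_3\deg(v)$ contradicts irregularity. The only difference is cosmetic: the paper invokes the fact that twins have equal $K_3$-degrees, while you verify that equality by an explicit edge count in $G[N(a)]$.
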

\begin{proof}
    By the partitioning technique, the number of edges in $A$ is $|E(A)| = K_3 \deg(v) = d$. This implies that for every vertex $a \in A$, we have $\deg_{G[A]}(a) \leq d$. 
    Moreover, since $a$ and $v$ are adjacent by construction and the original graph is $r$-regular, we obtain $\deg_{G[A]}(a) \leq r-1$. If $\deg_{G[A]}(a) = r-1$, then $a$ and $v$ would be twins, implying $K_3 \deg(a) = K_3 \deg(v)$, which leads to a contradiction.
\end{proof}

\begin{corollary}~\label{no-iso-in-A-hat}
    The subgraph $\overline{G[A]}$ cannot have isolated vertices.
\end{corollary}

\begin{lemma}~\label{no-iso-in-B}
    The subgraph $G[B]$ cannot have isolated vertices.
\end{lemma}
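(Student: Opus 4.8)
The plan is to argue by contradiction, exploiting the $K_3$-irregularity of $G$ in the same way as in Lemma~\ref{bounds-on-degs-in-A}. Suppose, for contradiction, that some vertex $b \in B$ is isolated in $G[B]$, that is, $b$ has no neighbor inside $B$. I would then account for where the $r$ neighbors of $b$ must lie. Since $b \in B = V(G) \setminus N[v]$, the vertex $b$ is distinct from $v$ and not adjacent to $v$, so every neighbor of $b$ lies in $A \sqcup B$. As $b$ is isolated in $G[B]$, none of these neighbors lie in $B$, and hence all $\deg_G(b) = r$ of them belong to $A$.

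The crucial observation is then that this inclusion is forced to be an equality. Because $|A| = r = \deg_G(b)$ and $N_G(b) \subseteq A$, we must have $N_G(b) = A = N_G(v)$. Thus $b$ and $v$ are \emph{false twins}. The proof concludes by noting that false twins necessarily share the same triangle-degree: the $K_3$-degree of any vertex equals the number of edges induced by its neighborhood (this is exactly the identity $|E(A)| = K_3 \deg(v) = d$ from the partitioning technique), so $K_3 \deg(b) = |E(A)| = K_3 \deg(v) = d$. This equality of $K_3$-degrees contradicts the assumption that $G$ is $K_3$-irregular, completing the argument.

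I do not anticipate any serious obstacle here; the only step requiring a moment of care is the counting that upgrades $N_G(b) \subseteq A$ to $N_G(b) = A$, which is immediate from $|A| = r = \deg_G(b)$. The proof parallels the twin argument already used in Lemma~\ref{bounds-on-degs-in-A}, the difference being that there a vertex of $A$ becomes a true twin of $v$, whereas here an isolated vertex of $B$ becomes a false twin of $v$.
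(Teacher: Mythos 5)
Your argument is correct and is essentially identical to the paper's proof: an isolated vertex of $G[B]$ must have all $r$ of its neighbors in $A$, forcing $N_G(b) = A = N_G(v)$, so $b$ and $v$ are false twins with equal $K_3$-degrees, contradicting $K_3$-irregularity. No issues.
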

\begin{proof}
    Suppose, for contradiction, that $w$ is an isolated vertex in $G[B]$. Since $w$ has no neighbors in $B$, it must be adjacent to all vertices in $A$, meaning $N(w) = A$. This implies that $w$ and $v$ (the anchor of the partition) are false twins, and thus $K_3 \deg (v) = K_3 \deg(w)$.
\end{proof}

\begin{lemma}~\label{bounds-on-degs-in-B}
    For any vertex $b \in B$, we have \[\max\{1, |B|-1-|E(\overline{G[B]})|\} \leq \deg_{G[B]}(b) \leq \min\{r, |E(B)|, |B|-1\}.\]
\end{lemma}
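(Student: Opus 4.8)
The plan is to prove the upper and lower bounds separately, handling each of the three terms inside the $\min$ and each of the two terms inside the $\max$ as an independent elementary estimate. All five inequalities should follow from either the $r$-regularity of $G$, a direct edge-counting argument, or the no-isolated-vertex lemmas already established in this section.

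For the upper bound, I would record three observations. First, $b$ has at most $|B|-1$ potential neighbors inside $B$, so $\deg_{G[B]}(b) \leq |B|-1$. Second, since each neighbor of $b$ in $B$ contributes a distinct edge of $G[B]$, we have $\deg_{G[B]}(b) \leq |E(B)|$. Third, because $b \in B = V(G)\setminus N[v]$, the vertex $b$ is not adjacent to $v$, so all $r$ edges incident to $b$ land in $A \cup B$; in particular $\deg_{G[B]}(b) \leq r$. Combining these three gives $\deg_{G[B]}(b) \leq \min\{r, |E(B)|, |B|-1\}$.

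For the lower bound, the estimate $\deg_{G[B]}(b) \geq 1$ is exactly Lemma~\ref{no-iso-in-B}, which guarantees that $G[B]$ has no isolated vertices. The remaining inequality is obtained by passing to the complement $\overline{G[B]}$: the degree of $b$ there satisfies $\deg_{\overline{G[B]}}(b) = (|B|-1) - \deg_{G[B]}(b)$, and since the degree of any vertex cannot exceed the total number of edges of the graph containing it, we get $(|B|-1) - \deg_{G[B]}(b) \leq |E(\overline{G[B]})|$. Rearranging yields $\deg_{G[B]}(b) \geq |B|-1-|E(\overline{G[B]})|$, and together with the previous bound this gives the claimed maximum.

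The argument is entirely elementary, so I do not anticipate a serious obstacle; if anything requires the most care, it is the complement-based lower bound, which is simply the dual of the trivial ``degree at most the number of edges'' estimate applied inside $\overline{G[B]}$ rather than inside $G[B]$ itself. I would also note that the quantity $|B|-1-|E(\overline{G[B]})|$ may be negative, in which case the outer $\max$ with $1$ renders that branch of the bound vacuous, so no separate positivity argument is needed.
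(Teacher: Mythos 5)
Your proof is correct and follows essentially the same route as the paper: the three upper bounds are the same direct constraints from $|B|$, $|E(B)|$, and the $r$-regularity of $G$, the bound $\deg_{G[B]}(b)\geq 1$ invokes Lemma~\ref{no-iso-in-B} exactly as the paper does, and your complement argument via $\deg_{G[B]}(b)=|B|-1-\deg_{\overline{G[B]}}(b)$ and $\deg_{\overline{G[B]}}(b)\leq|E(\overline{G[B]})|$ is the paper's second lower bound verbatim. Your closing remark that the $\max$ with $1$ absorbs any negative value of $|B|-1-|E(\overline{G[B]})|$ is a small but accurate clarification the paper leaves implicit.
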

\begin{proof}
    The upper bound follows directly from the constraints given by $|E(B)|$, $r$, and $|B|$. For the lower bound, note that Lemma~\ref{no-iso-in-B} ensures that $b$ is not isolated, so we have ${\deg_{G[B]}(b) \geq 1}$. To establish the second lower bound, observe that it accounts for the case where $B$ contains many edges. The degree of $b \in B$ satisfies $\deg_{G[B]}(b) = |B| - 1 - \deg_{\overline{G[B]}}(b)$. Since $\deg_{\overline{G[B]}}(b) \leq |E(\overline{G[B]})|$, it follows that
    $\deg_{G[B]}(b) \geq |B| - 1 - |E(\overline{G[B]})|$.
\end{proof}

\subsection{Lower and upper bounds}

In this section, we provide bounds on the order $n$, the regularity $r$, and the $K_3$-degrees.

\begin{lemma}\label{K3_upper_bound}
Let $G$ be an $r$-regular $K_3$-irregular graph. Then, for any its vertex $v\in V(G)$, the following upper bound holds:
\begin{equation}~\label{K3_upper_bound_formula}
    K_3 \deg(v) \leq \binom{r}{2} - \biggl\lceil{\frac{r}{2}\biggl\rceil}.
\end{equation}
\end{lemma}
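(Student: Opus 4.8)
The plan is to reduce the bound on $K_3 \deg(v)$ entirely to a statement about the induced subgraph on the neighborhood $A = N(v)$, and then to exploit the structural constraint already recorded in Corollary~\ref{no-iso-in-A-hat}. By the partitioning technique, the central identity is $K_3 \deg(v) = |E(A)|$: every triangle through $v$ corresponds to exactly one edge inside $A$. Since $|A| = r$, the quantity $d = |E(A)|$ is an integer between $0$ and $\binom{r}{2}$, and proving the claimed upper bound is equivalent to showing that the complement $\overline{G[A]}$ must carry at least $\lceil r/2 \rceil$ edges.

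The first step is therefore to rewrite $d$ in complementary form. On the $r$-vertex set $A$ we have the clean identity
\[
K_3 \deg(v) = |E(A)| = \binom{r}{2} - |E(\overline{G[A]})|,
\]
so the entire problem collapses to a lower bound on $|E(\overline{G[A]})|$.

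The second step is to invoke Corollary~\ref{no-iso-in-A-hat}, which guarantees that $\overline{G[A]}$ has no isolated vertex. The key elementary fact I would then use is that any graph on $r$ vertices with no isolated vertex must contain at least $\lceil r/2 \rceil$ edges. This follows by a counting argument: each edge is incident to at most two vertices, and since all $r$ vertices must be covered by the edge set, one needs at least $r/2$ edges, hence at least $\lceil r/2 \rceil$ since the count is an integer. (Concretely, the extremal configuration is a perfect or near-perfect matching.) Combining this with the identity from the first step yields
\[
K_3 \deg(v) = \binom{r}{2} - |E(\overline{G[A]})| \leq \binom{r}{2} - \left\lceil \frac{r}{2} \right\rceil,
\]
which is exactly \eqref{K3_upper_bound_formula}.

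I do not expect a genuine obstacle here, as the argument is short once the complementary reformulation is in place; the only point requiring care is the elementary bound in the second step, namely that a graph without isolated vertices on $r$ vertices has at least $\lceil r/2 \rceil$ edges. The conceptual content of the lemma is entirely front-loaded into Corollary~\ref{no-iso-in-A-hat}, so the proof is essentially the observation that forbidding isolated vertices in $\overline{G[A]}$ is precisely the mechanism preventing $G[A]$ from being too dense (in particular, complete).
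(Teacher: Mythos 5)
Your proposal is correct and is essentially the paper's own argument in contrapositive form: the paper assumes $K_3\deg(v)$ exceeds the bound, finds a universal vertex in $G[N(v)]$ by pigeonhole, and derives a twin contradiction, while you invoke Corollary~\ref{no-iso-in-A-hat} (which encapsulates that same twin argument) and the standard edge count for graphs without isolated vertices. The mathematical content is identical, so no further comparison is needed.
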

\begin{proof}
To the contrary, assume that such a graph $G$ exists and has a vertex $v\in V(G)$ with $K_3 \deg(v) > \binom{r}{2} - \ceil{\frac{r}{2}}$. Consider the induced subgraph $H=G[N(v)]$, consisting of the neighborhood of $v$. Then, $|E(H)| > \binom{r}{2} - \ceil{\frac{r}{2}}$, and by the Pigeonhole Principle, $H$ must contain a universal vertex $w$. Hence, $v$ and $w$ are twins, implying $K_3 \deg(v) = K_3 \deg(w)$.
\end{proof}

Since all $K_3$-degrees are non-negative pairwise distinct integers, the following bound on the order of $G$ clearly follows.

\begin{corollary}\label{K3_upper_bound_n}
Let $G$ be an $r$-regular $K_3$-irregular graph with $n$ vertices. Then
\[n \leq \binom{r}{2} - \biggl\lceil{\frac{r}{2}\biggr\rceil} + 1.\]
\end{corollary}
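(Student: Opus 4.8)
The plan is to combine the per-vertex upper bound of Lemma~\ref{K3_upper_bound} with the defining property of $K_3$-irregularity through an elementary counting argument. First, I would observe that since $G$ is $K_3$-irregular, the $n$ numbers $\{K_3 \deg(v) : v \in V(G)\}$ are pairwise distinct. Each of them is a non-negative integer (it literally counts triangles through a vertex), and by Lemma~\ref{K3_upper_bound} each one satisfies $K_3 \deg(v) \leq \binom{r}{2} - \ceil{\frac{r}{2}}$. Consequently, all $n$ of these distinct values lie in the integer interval $\{0, 1, \dots, \binom{r}{2} - \ceil{\frac{r}{2}}\}$.

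The second and final step is to count: this interval contains exactly $\binom{r}{2} - \ceil{\frac{r}{2}} + 1$ integers, and a collection of $n$ pairwise distinct integers cannot fit inside a set of fewer than $n$ elements. Therefore $n \leq \binom{r}{2} - \ceil{\frac{r}{2}} + 1$, which is precisely the claimed bound. This is the pigeonhole principle in its simplest form.

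I do not expect any real obstacle here, since the substantive work has already been carried out in proving Lemma~\ref{K3_upper_bound}. The only points worth confirming explicitly are that the $K_3$-degrees are genuinely distinct, which is exactly the hypothesis of $K_3$-irregularity, and that they are non-negative, which is automatic. Note that the argument requires no assumption that the maximal value $\binom{r}{2} - \ceil{\frac{r}{2}}$ is actually attained; an upper bound on the individual degrees together with their distinctness already forces the stated ceiling on $n$.
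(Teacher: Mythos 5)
Your argument is correct and matches the paper's reasoning exactly: the paper also derives the bound by noting that the $n$ pairwise distinct, non-negative $K_3$-degrees must all lie below the ceiling from Lemma~\ref{K3_upper_bound}, so $n$ cannot exceed the size of that integer range. No gaps.
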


Now, we establish a lower bound for the $K_3$-degrees of vertices in regular $K_3$-irregular graphs.

\begin{lemma}\label{K3_lower_bound}
Let $G$ be an $r$-regular $K_3$-irregular graph with $n$ vertices. Then, for any its vertex $v\in V(G)$, the following lower bound holds:
\begin{equation}
    K_3 \deg(v) \geq \ceil{\frac{n-r-1}{2}} - \frac{nr}{2} + r^2.
\end{equation}
\end{lemma}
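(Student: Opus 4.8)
The plan is to read off the desired inequality directly from the partitioning technique, using only the fact that $G[B]$ has no isolated vertices. First I would fix the vertex $v$, write $d = K_3 \deg(v)$, and recall the two quantities computed in the setup of Section~\ref{part-tech}: from \eqref{vert-in-B} we have $|B| = n - r - 1$, and from \eqref{edg-in-B} we have $|E(B)| = \tfrac{nr}{2} - r^2 + d$. The whole argument then reduces to bounding $|E(B)|$ from below in a second, independent way and comparing the two expressions.

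The key step is a lower bound on the number of edges of $G[B]$ coming from Lemma~\ref{no-iso-in-B}. Since no vertex of $B$ is isolated in $G[B]$, every such vertex has $\deg_{G[B]}(b) \geq 1$, so by the Handshaking Lemma
\[
2\,|E(B)| \;=\; \sum_{b \in B} \deg_{G[B]}(b) \;\geq\; |B| \;=\; n - r - 1 .
\]
Because $|E(B)|$ is an integer, this upgrades to $|E(B)| \geq \ceil{\tfrac{n-r-1}{2}}$; the ceiling is forced precisely when $n-r-1$ is odd, where a near-perfect matching on $B$ already attains the bound.

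Finally I would combine the two descriptions of $|E(B)|$: substituting the formula $|E(B)| = \tfrac{nr}{2} - r^2 + d$ into the inequality $|E(B)| \geq \ceil{\tfrac{n-r-1}{2}}$ gives $\tfrac{nr}{2} - r^2 + d \geq \ceil{\tfrac{n-r-1}{2}}$, and isolating $d = K_3\deg(v)$ yields exactly the claimed lower bound. I do not expect a genuine obstacle here: the argument is elementary, and the only point requiring a moment's care is the passage to the ceiling via integrality rather than the main estimate itself.
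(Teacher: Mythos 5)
Your proposal is correct and follows exactly the paper's argument: invoke Lemma~\ref{no-iso-in-B} to get $|E(B)| \geq \lceil |B|/2 \rceil$, then substitute the partitioning-technique formulas \eqref{vert-in-B} and \eqref{edg-in-B} and rearrange. Your explicit justification of the ceiling via the Handshaking Lemma and integrality is a detail the paper leaves implicit, but the route is the same.
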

\begin{proof}
By Lemma~\ref{no-iso-in-B}, there are no isolated vertices in $G[B]$, which implies $|E(B)| \geq \ceil{\frac{|B|}{2}}$. By the partitioning technique, we have $|B| = n-r-1$, and $|E(B)| = \frac{nr}{2} - r^2 + K_3 \deg(v)$. By substituting and rearranging terms, we obtain the desired inequality.
\end{proof}

Let $M$ denote the maximum, and $m$ denote the minimum among all $K_3$-degrees of some regular $K_3$-irregular graph. Since all the $K_3$-degrees are distinct, we have $M - m + 1 \geq n$. Substituting the results of Lemma~\ref{K3_upper_bound} for~$M$ and Lemma~\ref{K3_lower_bound} for $m$, we obtain the next result.

\begin{corollary}\label{K3_both_bounds}
For any $r$-regular $K_3$-irregular graph with $n$ vertices, it holds
\begin{equation}
    \left(\binom{r}{2} + \frac{nr}{2} + 1\right) - \left(\biggl\lceil{\frac{r}{2}\biggr\rceil} + \ceil{\frac{n-r-1}{2}} + r^2\right) \geq n.
\end{equation}
\end{corollary}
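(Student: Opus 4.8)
The plan is to derive this inequality as a direct consequence of the two bounds just established, combined with one elementary counting observation. First I would exploit the hypothesis that $G$ is $K_3$-irregular: its $n$ vertices then carry $n$ pairwise distinct $K_3$-degrees, each of which is a non-negative integer. Writing $M$ and $m$ for the largest and smallest of these values, all $n$ of them lie in the integer interval $[m, M]$, which contains precisely $M - m + 1$ integers. Since they are distinct, this count must be at least $n$, giving $M - m + 1 \geq n$. This is the only place where $K_3$-irregularity enters the argument.

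Next I would feed in the two preceding lemmas to bound $M$ and $m$ separately. Lemma~\ref{K3_upper_bound} applies to every vertex, so in particular $M \leq \binom{r}{2} - \ceil{\frac{r}{2}}$. Likewise, Lemma~\ref{K3_lower_bound} gives $m \geq \ceil{\frac{n-r-1}{2}} - \frac{nr}{2} + r^2$. Because $M$ is controlled from above and $m$ from below, their difference is controlled from above: negating the lower bound on $m$ and adding it to the upper bound on $M$ yields
\[
M - m \leq \left(\binom{r}{2} - \ceil{\frac{r}{2}}\right) - \left(\ceil{\frac{n-r-1}{2}} - \frac{nr}{2} + r^2\right).
\]

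Finally, chaining $n \leq M - m + 1$ with this estimate and regrouping the right-hand side into the additive terms $\binom{r}{2} + \frac{nr}{2} + 1$ and the subtracted terms $\ceil{\frac{r}{2}} + \ceil{\frac{n-r-1}{2}} + r^2$ reproduces the stated inequality. The argument has no genuine obstacle: the substantive content is the pigeonhole step $M - m + 1 \geq n$, and everything afterward is mechanical. The only point demanding care is the bookkeeping of signs --- a \emph{lower} bound on $m$ becomes a positive contribution once it is subtracted --- so I would double-check that the two ceiling terms and $r^2$ all land inside the subtracted group rather than the additive one before declaring the regrouping complete.
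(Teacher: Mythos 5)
Your proposal is correct and follows exactly the paper's own argument: the pigeonhole observation $M - m + 1 \geq n$ for the distinct $K_3$-degrees, followed by substituting the upper bound on $M$ from Lemma~\ref{K3_upper_bound} and the lower bound on $m$ from Lemma~\ref{K3_lower_bound}. Nothing is missing; the sign bookkeeping you flag is handled correctly.
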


\section[The non-existence of regular K\_3-irregular graphs for small regularities]{The non-existence of regular $\boldsymbol{K_3}$-irregular graphs \\ for small regularities}\label{sect-small-regs}

In this section, we investigate regular $K_3$-irregular graphs of small regularities. We begin with a proof that no such graphs exist for $r \leq 6$. In \cite{reg-triangle:24}, a brute-force search established that there are no regular $K_3$-irregular graphs up to $15$ vertices. This finding allows us to simplify some cases of our subsequent results.
Nevertheless, we provide a completely analytical proof that goes deeply into the structure of regular $K_3$-irregular graphs and demonstrates that they cannot exist for $r \leq 7$. Thus, we corroborate the computational findings and extend them even further. We begin by tackling the cases of regularities up to $6$ and then proceed to $r=7$.

\begin{proposition}
No $r$-regular $K_3$-irregular graphs exist for $r \leq 6$.
\end{proposition}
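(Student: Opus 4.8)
The plan is to bound the order $n$ for each regularity $r \leq 6$ via Corollary~\ref{K3_upper_bound_n}, dispatch the resulting finite list of candidate pairs $(r,n)$, and wherever possible pass to the complement (Corollary~\ref{reg-k3-irreg-complement}) to reduce to a strictly smaller regularity that has already been excluded. Throughout I use that an $r$-regular graph requires $n \geq r+1$ vertices and that $nr$ must be even.

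For $r \leq 3$ the bound $n \leq \binom{r}{2} - \lceil r/2 \rceil + 1$ yields $n \leq 0, 1, 2$ respectively, each below the minimum $r+1$ needed for an $r$-regular graph, so no candidate survives. For $r = 4$ the bound forces $n = 5$, hence $G = K_5$, which is regular but has constant $K_3$-degree and is thus not $K_3$-irregular. For $r = 5$ parity together with the bound leaves only $n \in \{6, 8\}$: the value $n = 6$ gives $K_6$, while for $n = 8$ the complement is $2$-regular, and Corollary~\ref{K3_upper_bound_n} applied to regularity $2$ permits at most $1$ vertex, contradicting Corollary~\ref{reg-k3-irreg-complement}. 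The same complementation idea covers most of $r = 6$: the bound gives $7 \leq n \leq 13$, and for $7 \leq n \leq 12$ the complement is $(n-7)$-regular; one checks that in each of these six cases the order $n$ strictly exceeds the upper bound of Corollary~\ref{K3_upper_bound_n} for regularity $n-7$, so by Corollary~\ref{reg-k3-irreg-complement} no such $G$ exists.

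The remaining, and genuinely hard, case is $r = 6$, $n = 13$, where the complement is again $6$-regular and complementation gives no reduction. Here I would combine Lemma~\ref{K3_upper_bound} and Lemma~\ref{K3_lower_bound}, which pin every $K_3$-degree into the interval $[0, 12]$; this interval contains exactly $13$ integers, so the $K_3$-degree sequence must be precisely $0, 1, \ldots, 12$. I would note that the divisibility test of Corollary~\ref{triangle-shake-lemma} is satisfied here, since the forced sum $0 + 1 + \cdots + 12 = 78$ is divisible by $3$, so no contradiction comes cheaply and a structural argument is required. In particular, there is a vertex $v$ with $K_3 \deg(v) = 0$. Applying the partitioning technique to $v$, its neighborhood $A$ is independent, while the remaining set $B$ satisfies $|B| = 6$ and $|E(B)| = 3$; since $G[B]$ has no isolated vertices by Lemma~\ref{no-iso-in-B}, it must be a perfect matching of three edges.

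The crux is then a triangle count. Enumerating all triangle types across the partition $\{v\} \sqcup A \sqcup B$ shows that, because $A$ is independent, $v$ has no neighbors in $B$, and $B$ is a matching, every triangle of $G$ consists of one matching edge of $B$ together with a common neighbor in $A$. Each of the three matching edges therefore supports at most $|A| = 6$ triangles, so $G$ contains at most $18$ triangles. On the other hand, the forced degree sequence gives a total of $\tfrac{1}{3}(0 + 1 + \cdots + 12) = 26$ triangles, a contradiction. This final step — using the forced zero-triangle vertex to cap the global triangle count well below the value demanded by the full degree sequence — is where I expect to spend the most care, since it is the only part of the argument that is not a mechanical bound-plus-complement reduction.
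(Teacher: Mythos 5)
Your proof is correct, and while its overall skeleton (bound $n$, dispatch the small cases, then a structural argument for $r=6$, $n=13$) matches the paper's, the decisive step takes a genuinely different route. The paper anchors the partitioning technique at the vertex of \emph{maximum} $K_3$-degree $12$, deduces $G[B]\simeq K_6$ with each $b\in B$ having exactly one neighbour in $A$, and reaches a contradiction because either two vertices of $B$ are twins or all six of them have $K_3$-degree $\binom{5}{2}=10$. You instead anchor at the vertex of $K_3$-degree $0$, so that $A$ is independent and $G[B]$ is a perfect matching on $6$ vertices, and then observe that every triangle must consist of a matching edge of $B$ together with a common neighbour in $A$; this caps the triangle count at $18$ (in fact at $15$, since each endpoint of a matching edge has only $5$ neighbours in $A$), against the $26$ triangles forced by the degree sequence $0,1,\dots,12$. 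The two anchors are complementary to one another --- unsurprising, since the complement of a $6$-regular graph on $13$ vertices is again $6$-regular --- and both arguments are sound; yours trades the paper's local twin/pigeonhole reasoning for a clean global count, and you correctly note that the divisibility test fails to help here since $78$ is divisible by $3$. Your other deviations are also fine: for $r=5$, $n=8$ you rule out the $2$-regular complement via Corollary~\ref{K3_upper_bound_n} and Corollary~\ref{reg-k3-irreg-complement} rather than the paper's automorphism argument, and for $r=6$ you obtain $n\geq 13$ by complementing through the cases $7\leq n\leq 12$ rather than invoking Corollary~\ref{K3_both_bounds}.
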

\begin{proof}

\emph{Cases $r=1,2$}: These are trivial.

\emph{Case $r=3$}: By Lemma~\ref{K3_upper_bound}, $K_3 \deg(v) \leq \binom{3}{2} - \ceil{\frac{3}{2}} = 1$, implying $n \leq 2$, which contradicts the regularity $3$. 

\emph{Case $r=4$}: By Lemma~\ref{K3_upper_bound}, $K_3 \deg(v) \leq \binom{4}{2} - \ceil{\frac{4}{2}} = 4$, implying $n \leq 5$. The only $4$-regular graph with $5$ vertices is $K_5$, which is not $K_3$-irregular.

\emph{Case $r=5$}: By Lemma~\ref{K3_upper_bound}, $K_3 \deg(v) \leq \binom{5}{2} - \ceil{\frac{5}{2}} = 7$, implying $n \leq 8$. Since the regularity is odd, the only cases to be considered are $n \in \{6,8\}$. For $n=6$, the only $5$-regular graph is a complete graph $K_6$, which is not $K_3$-irregular. For $n=8$, the complement $\overline{G}$ is a $2$-regular graph. Thus, it has a nontrivial automorphism group and cannot be $K_3$-irregular.




\emph{Case $r = 6$}: To obtain the lower bound on $n$, we apply Corollary~\ref{K3_both_bounds}, and derive the inequality
\[\left( \binom{6}{2} + \frac{6n}{2} + 1\right) - \left(\ceil{\frac{6}{2}} + \ceil{\frac{n-6-1}{2}} + 6^2\right) \geq n.\]
Simplifying, we get
\[2n - \ceil{\frac{n-7}{2}} \geq 23.\]
From this, we deduce that $n \geq 13$ if $n$ is odd, and $n \geq 14$ if $n$ is even. Next, applying Lemma~\ref{K3_upper_bound}, we find that $K_3 \deg(v) \leq 12$, leading to $n \leq 13$. The combination of these constraints implies that $n=13$ with $K_3$-degrees ranging consecutively from $0$ to $12$.

Consider the partitioning technique (see Section~\ref{part-tech}) with respect to the vertex $v$ having $K_3 \deg(v) = 12$. Then, by (\ref{vert-in-B}) and (\ref{edg-in-B}), we get
\begin{align*}
|B| &= n - r - 1 = 13 - 6 - 1 = 6\text{, and}\\
|E(B)| &=\frac{nr}{2} - r^2 + d = 39 - 36 + 12 = 15\text{, implying that $G[B] \simeq K_6$.}
\end{align*}
Now, it follows that each vertex $b \in B$ has $\deg_{G[B]}(b)=5$, and therefore must be adjacent to exactly one vertex in $A$. 

Suppose that there exists a pair of vertices $v,u \in B$ sharing a common neighbor from $A$, that is
\[{|N(v) \cap N(u) \cap A| = 1}.\] 
Then $v$ and $u$ are twins and have ${K_3 \deg(v) = K_3 \deg(u)}$. As a result, for every pair of vertices $v,u \in B$, it holds
\[N(v) \cap N(u) \cap A = \varnothing.\] 
However, in this case, each vertex $b \in B$ has
\[K_3 \deg_G(b) = K_3 \deg_{G[B]}(b) = \binom{5}{2} = 10.\] 
This contradicts $G$ being $K_3$-irregular, and finishes the proof.
\end{proof}

Now we present our main result for this section.

\begin{theorem}\label{no7reg}
    No $7$-regular $K_3$-irregular graph exists.
\end{theorem}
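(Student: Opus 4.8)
The plan is to first pin down the order $n$ to a short list and then eliminate each candidate. Applying Corollary~\ref{K3_both_bounds} with $r=7$ gives, after simplification, the inequality $\tfrac{5n}{2} - \lceil\tfrac{n-8}{2}\rceil \geq 31$; since $G$ is $7$-regular, the handshaking lemma forces $n$ to be even, and the inequality then yields $n \geq 14$. On the other hand, Corollary~\ref{K3_upper_bound_n} gives $n \leq \binom{7}{2} - \lceil\tfrac{7}{2}\rceil + 1 = 18$. Hence only the three even values $n \in \{14, 16, 18\}$ remain, and it suffices to rule out each one.

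The case $n = 14$ is immediate from the complement. If a $7$-regular $K_3$-irregular graph on $14$ vertices existed, then by Corollary~\ref{reg-k3-irreg-complement} its complement would be a regular $K_3$-irregular graph of regularity $n - r - 1 = 6$, contradicting the preceding proposition. (This also agrees with the brute-force search of~\cite{reg-triangle:24}, which covers all orders up to $15$.) The remaining two orders exceed $15$, so they are not settled by the computer search and must be treated structurally.

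For both $n = 16$ and $n = 18$ I would invoke the partitioning technique at an extremal vertex. When $n = 18$ the bounds are tight: since $K_3\deg \leq 17$ and there are $18$ distinct non-negative $K_3$-degrees, they must be exactly $0, 1, \ldots, 17$. I would anchor the partition at the vertex $v$ with $K_3\deg(v) = 0$, so that $A = N(v)$ is independent ($|E(A)| = d = 0$), with $|A| = 7$, $|B| = 10$, and $|E(B)| = \tfrac{nr}{2} - r^2 + d = 14$. Independence of $A$ means every triangle through an $A$-vertex has its other two vertices in $B$; thus $K_3\deg(a)$ equals the number of edges of $G[B]$ spanned by the $6$-set $N(a)\cap B$, which is at most $\binom{6}{2}$. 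Combining this with $|E(B)| = 14$ and Lemma~\ref{no-iso-in-B} (no isolated vertex in $G[B]$) forces the $K_3$-degrees of all seven $A$-vertices well below the top of the range, so that the values $16$ and $17$ can be realised only inside $B$. From here the plan is to track the $14$ edges of $G[B]$ together with the $42$ edges between $A$ and $B$, and use a pigeonhole/twin argument (Lemmas~\ref{bounds-on-degs-in-A}--\ref{bounds-on-degs-in-B}) to force two vertices to share a $K_3$-degree. For $n = 16$ the two absent $K_3$-degrees are constrained by Corollary~\ref{triangle-shake-lemma}: their sum must be divisible by $3$. Here I would anchor the partition at the maximum vertex, so that $\overline{G[A]}$ has few edges and is pinned down by Corollary~\ref{no-iso-in-A-hat}, and then run the same edge-bookkeeping inside $B$.

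The main obstacle is clearly this last step: unlike the $r \leq 6$ cases, no single clean invariant — the order bound, the complement reduction, or the divisibility constraint — eliminates $n = 16$ and $n = 18$ outright. The difficulty is that the partition leaves genuine freedom in how the few $A$--$B$ edges attach and how the edges of $G[B]$ are distributed, so the contradiction must come from a careful and somewhat lengthy case analysis showing that every admissible configuration produces either a pair of (false) twins or a repeated $K_3$-degree. I expect this bookkeeping, rather than the bounding or the complement argument, to be where the real work lies.
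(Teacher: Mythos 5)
Your reduction to $n \in \{14,16,18\}$ is correct and matches the paper, and your disposal of $n=14$ is actually cleaner than the paper's: the complement of a $7$-regular $K_3$-irregular graph on $14$ vertices is $6$-regular and $K_3$-irregular by Corollary~\ref{reg-k3-irreg-complement}, contradicting the preceding proposition. (The paper instead applies Lemma~\ref{K3_lower_bound} to get $K_3\deg \geq 4$, anchors the partition at the vertex of $K_3$-degree $17$, and finds $|E(B)|=17 > \binom{6}{2}$.) Both are valid; yours reuses machinery already established.

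However, the proposal has a genuine gap: the cases $n=16$ and $n=18$ are not proved, only planned, and you say so yourself. These two cases are where essentially all of the work in the theorem lies. What is missing is the concrete mechanism that closes each case. For $n=16$ the paper does not chase ``which two degrees are absent'' in general; it shows separately that $K_3$-degree $0$ is impossible (with $|E(A)|=0$ and $|E(B)|=7$, every $a\in A$ satisfies $1 \leq K_3\deg(a) \leq |E(B)|-1 = 6$, but $|A|=7$ --- pigeonhole) and that $K_3$-degree $17$ is impossible ($G[B]$ is $K_8$ minus a perfect matching, forcing all eight vertices of $B$ into the five values $12,\dots,16$ --- pigeonhole), after which the surviving range $1,\dots,16$ has sum $136 \not\equiv 0 \pmod 3$. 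Your weaker observation that the two absent degrees must sum to a multiple of $3$ does not by itself eliminate any configuration. For $n=18$ the paper shows the degrees $0$ and $17$ cannot coexist by anchoring at the $17$-vertex, pinning $\overline{G[A]} \simeq 2K_2 \cup P_3$ to exclude the $0$-vertex from $A$, and then an edge count in $B$ that forces $\deg_{G[B]}(w)=5$ exactly and hence degree $8$ inside $G[B]$ for every vertex of $S$, contradicting $7$-regularity. Your alternative anchoring at the $0$-vertex correctly places the degrees $16$ and $17$ inside $B$ (since $K_3\deg(a) \leq |E(B)| = 14$ for $a \in A$), but no contradiction is then derived; ``track the edges and use a pigeonhole/twin argument'' is a hope, not a proof, and it is not evident that your chosen anchor leads to one as directly. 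As written, the proposal establishes only the case $n=14$.
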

\begin{proof}
Substituting $r=7$ into Corollary~\ref{K3_both_bounds}, we obtain the lower bound on the order $n \geq 14$. Meanwhile, Lemma~\ref{K3_upper_bound} provides the upper bound $K_3 \deg(v) \leq 17$, which implies $n \leq 18$. Since the regularity is odd, we only need to consider even values for $n$, that is $n \in \{14, 16, 18\}$. We examine each case separately.

\paragraph{\large \emph{Case $n=14$}:}
By Lemma~\ref{K3_lower_bound}, we have \[K_3 \deg(v) \geq \ceil{\frac{14-7-1}{2}} - \frac{7 \cdot 14}{2} + 7^2 = 4.\] Thus, the $K_3$-degrees lie in the range $4 \leq K_3 \deg(v) \leq 17$, which yields exactly $14$ distinct $K_3$-degrees. Consider the partitioning technique (see Section~\ref{part-tech}) with respect to a vertex $v$ such that $K_3 \deg(v) = 17$. This leads to $|B| = 6$ and ${|E(B)| = 17}$. However, the maximum number of edges in a graph on $6$ vertices is ${|E(K_6)| = 15}$, which is a contradiction.

\paragraph{\large \emph{Case $n=16$}:} 
We will first show that a vertex $v$ with $K_3 \deg(v) = 0$ cannot exist. Consider the partitioning technique with respect to $v$. We obtain $|B| = 8$ and $|E(B)| = 7$.

Since $|E(A)|=0$, the $K_3$-degree of every vertex $a \in A$ is bounded above by $K_3 \deg(a) \leq |E(B)|$. According to Lemma~\ref{no-iso-in-B}, $G[B]$ cannot have isolated vertices. Additionally, every vertex from $A$ has 6 neighbors from $B$ and $|B|$ = 8. This ensures that at least one edge from $E(B)$ is not fully contained in $G[N[a]]$, leading to a stricter bound: ${K_3 \deg(a) \leq |E(B)| - 1 = 6}$. Therefore, for all $a \in A$ the bounds hold $1 \leq K_3 \deg(a) \leq 6$. However, $A$ has $7$ vertices and, by the Pigeonhole Principle, it is a contradiction.
    
Now, let us show that a vertex $v$ with $K_3 \deg(v) = 17$ cannot exist in this case either. Consider the partitioning technique for this vertex.
We have $|B| = 8$ and $|E(B)|=24$. Since the complete graph with $8$ vertices has $|E(K_8)| = 28$ edges, the structure of $G[B]$ is $K_8$ with $4$ edges removed. Moreover, $G[B]$ cannot contain a universal vertex $u$, as that would imply $K_3 \deg(u) = 17 = K_3 \deg(v)$. Therefore, each vertex in $B$ must be nonadjacent to at least one other vertex in $B$, meaning $\overline{G[B]} \simeq 4K_2$.
By symmetry, for all $b \in B$ the lower bound on $K_3$-degree holds \[K_3 \deg(b) \geq \binom{|B|-2}{2} - 3 = 12.\] 
Thus, the vertices in $G[B]$ have $K_3$-degrees ranging from $12$ to $16$. Since there are $8$ vertices but only $5$ distinct possible $K_3$-degrees, by the Pigeonhole Principle, we get a contradiction.

Now, we have shown that if $n=16$, then $1 \leq K_3 \deg(w) \leq 16$. 
However, the total sum of $K_3$-degrees $\sum_{k=1}^{16} k = 136$ is not divisible by $3$, which contradicts the necessary divisibility condition of Corollary~\ref{triangle-shake-lemma}, and rules out this case.

\paragraph{\large \emph{Case $n=18$}:}
We will show that vertices with the $K_3$-degrees $0$ and $17$ cannot coexist. To do so, we apply the partitioning technique to the vertex $v$ with triangle-degree $17$ and show that the vertex $w$ having $K_3 \deg(w) = 0$ cannot lie in either $A$ or $B$.

Applying the partitioning technique to the vertex $v$ yields $|A| = 7$, and $|E(A)| = 17$, consequently $|E(\overline{G[A]})| = 4$. By Lemma~\ref{no-iso-in-A-hat}, the subgraph $\overline{G[A]}$ has no isolated vertices, therefore $\overline{G[A]} \simeq 2K_2 \cup P_3$. 

For every vertex $a \in A$ with $\deg_{\overline{G[A]}}(a) = 2$, we have $K_3 \deg(a) \geq \binom{4}{2} - 2 = 4$. And for all $a \in A$ with $\deg_{\overline{G[A]}}(a) = 1$, we have $K_3 \deg(a) \geq \binom{5}{2} - 3 = 7$. This shows that $w$ cannot belong to $A$, so it must lie in $B$.

We partition $B$ as $B = \{w\} \sqcup N(w) \sqcup S$, where $S = B \setminus N[w]$.
The total number of edges in $G[B]$ can be expressed as follows:
\begin{equation*}
|E(B)| = |E(\{w\}, N(w))| + |E(N(w))| + |E(N(w), S)| + |E(S)|.
\end{equation*}
Since $K_3 \deg(w) = 0$, we have $|E(N(w))|=0$, and also $|E(\{w\}, N(w))|$ can be replaced by $\deg_{G[B]}(w)$. 
Now, the formula takes the form
\begin{equation}\label{eb-partition-7-18}
       |E(B)| = \deg_{G[B]}(w) + |E(N(w), S)| + |E(S)|.
\end{equation}
Note that the latter two parts are bounded by the number of edges in a complete bipartite graph and in a complete graph, respectively. Therefore, $|E(N(w), S)| \leq \deg_{G[B]}(w) \cdot |S|$, and $|E(S)| \leq \binom{\deg_{\overline{G[B]}}(w)}{2}$. Thus, we derive the next upper bound:
\begin{equation*}
       |E(B)| \leq \deg_{G[B]}(w) + \deg_{G[B]}(w) \cdot \deg_{\overline{G[B]}}(w) + \binom{\deg_{\overline{G[B]}}(w)}{2}.
\end{equation*}
In addition, we make the substitution $\deg_{\overline{G[B]}}(w) = |B| - 1 - \deg_{G[B]}(w)$ and, by the partition technique, we obtain $|B| = 10$ and ${|E(B)| = 31}$. Using these values the inequality takes the following form:
\begin{equation}\label{7-reg-18v-count-B}
       31 \leq \deg_{G[B]}(w) + \deg_{G[B]}(w) \cdot (9 - \deg_{G[B]}(w)) + \binom{9 - \deg_{G[B]}(w)}{2}.
\end{equation}
Solving~\ref{7-reg-18v-count-B} results in the bound $\deg_{G[B]}(w) \leq 5$. However, on the other side, the structure $\overline{G[A]} \simeq 2K_2 \cup P_3$, and the fact that $w$ has zero triangle degree gives $|N(w) \cap A| \leq 2$, implying $\deg_{G[B]}(w) \geq 5$.

\begin{figure}[ht]
    \centering
    \begin{tikzpicture}[auto,on grid, state/.style ={circle, top color =black , bottom color = black , draw, minimum width=1mm}, inner sep=1.5pt, scale = 1]

	\node[state] (s1) {};
	\node[state] (s2) [right = 10mm of s1] {};
	\node[state] (s3) [right = 10mm of s2] {};
	\node[state] (s4) [right = 10mm of s3] {};

    \node[state] (w3) [below left = 20mm and 5mm of s3] {};
	\node[state] (w2) [left = 12mm of w3] {};
    \node[state] (w1) [left = 12mm of w2] {};
	\node[state] (w4) [right = 12mm of w3] {};
	\node[state] (w5) [right = 12mm of w4] {};

    \node[state, label=below:$w$] (w) [below = 20mm of w3] {};

    \draw[line width=0.5mm] ([xshift=5mm]s2) ellipse (22mm and 11mm);

    \draw[line width=0.5mm] ([yshift=3mm]w3) ellipse (34mm and 34mm);
    \node[above left = 10mm and 15mm of s1] {\large $G[B]$};
    \node[above right = 5mm and 2mm of s1] {\large $S$};

	\draw (s1) -- (s2);
	\draw[bend left=30] (s1) to (s3);
	\draw[bend left=30] (s1) to (s4);
	\draw (s2) -- (s3);
	\draw[bend left=30] (s2) to (s4);
    \draw (s3) -- (s4);

    \foreach \si in {1,...,4}
        \foreach \wi in {1,...,5}
            \draw (s\si)--(w\wi);

    \foreach \wi in {1,...,5}
            \draw (w)--(w\wi);
			
    \end{tikzpicture}
    \caption{Illustration to the proof of Theorem~\ref{no7reg}, case $n=18$.}\label{fig-7reg-18v}
\end{figure}
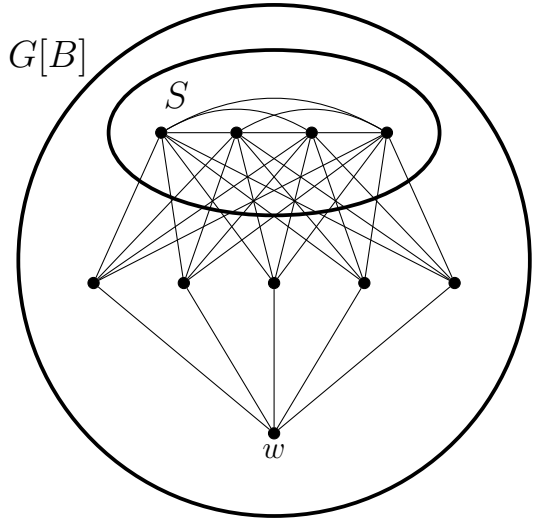

Substituting $\deg_{G[B]}(w) = 5$ into the right-hand side of inequality~(\ref{7-reg-18v-count-B}) yields the value $31$. Therefore, for each vertex $s \in S$ it holds $|N_{G[B]}(s)| = |S| - 1 + \deg_{G[B]}(w) = 8$ (see Figure~\ref{fig-7reg-18v} for an illustration), contradicting the assumed $7$-regularity of the graph. Thus, the proof is complete.
\end{proof}

\section{The curious case of 8-regular graphs}\label{sect-8-reg}

In this section, we establish bounds on the order of $8$-regular $K_3$-irregular graphs, narrowing down the possibilities to six cases: $17 \leq n \leq 22$. We start with settling the lower bound.

\begin{proposition}
    Any $8$-regular $K_3$-irregular graph has order $n \geq 17$.
\end{proposition}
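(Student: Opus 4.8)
The plan is to combine the order lower bound from Corollary~\ref{K3_both_bounds} with the complement duality of Corollary~\ref{reg-k3-irreg-complement} and the non-existence results already established for small regularities.

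First I would substitute $r = 8$ into Corollary~\ref{K3_both_bounds}. Using $\binom{8}{2} = 28$ and $r^2 = 64$, the inequality simplifies (after multiplying through to clear the halves) to $3n - 39 \geq \lceil (n-9)/2 \rceil$. Splitting into the two parity cases for the ceiling --- namely $\lceil (n-9)/2 \rceil = (n-8)/2$ when $n$ is even and $\lceil (n-9)/2 \rceil = (n-9)/2$ when $n$ is odd --- yields $n \geq 14$ in the even case and $n \geq 15$ in the odd case. Hence every $8$-regular $K_3$-irregular graph satisfies $n \geq 14$, leaving only the three values $n \in \{14, 15, 16\}$ to eliminate.

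The key observation is that for $r = 8$ the complement $\overline{G}$ is $(n - r - 1) = (n - 9)$-regular, so the three remaining orders make $\overline{G}$ exactly $5$-, $6$-, and $7$-regular, respectively. By Corollary~\ref{reg-k3-irreg-complement}, if $G$ is a regular $K_3$-irregular graph then so is $\overline{G}$. But no regular $K_3$-irregular graph exists for regularities at most $6$ (by the preceding Proposition) nor for regularity $7$ (Theorem~\ref{no7reg}). Thus each of $n = 14$, $n = 15$, and $n = 16$ would force $\overline{G}$ to be a forbidden graph, a contradiction, so we conclude $n \geq 17$.

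There is essentially no hard obstacle here: the argument is a short chain of already-proved facts, and the only mildly delicate point is the parity case analysis of the ceiling when extracting the lower bound from Corollary~\ref{K3_both_bounds}. One should double-check that the even case genuinely produces $n \geq 14$ and not something larger, since it is precisely the value $n = 14$ that makes the complement $5$-regular and hence reducible to the earlier results; verifying this confirms that the complement reduction alone covers all three leftover orders, and no separate ad hoc structural argument is required.
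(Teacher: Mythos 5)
Your proof is correct and follows essentially the same route as the paper: substitute $r=8$ into Corollary~\ref{K3_both_bounds} and then kill the remaining small orders by passing to the complement via Corollary~\ref{reg-k3-irreg-complement} together with the non-existence results for regularities $\leq 7$. In fact your version is the more careful one --- the corollary really does give only $n \geq 14$ (the paper asserts it ``immediately yields $n \geq 16$'', but for $n=14$ one gets $3 \geq \lceil 5/2\rceil$, which holds), so your explicit elimination of $n \in \{14,15,16\}$ through the $5$-, $6$-, and $7$-regular complements is exactly what is needed to close the argument.
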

\begin{proof}
    Substitute $r=8$ into Corollary~\ref{K3_both_bounds} and solve for $n$, which immediately yields $n \geq 16$. The case of $n=16$ is impossible, since its complement would be a $7$-regular $K_3$-irregular graph (see Corollary~\ref{reg-k3-irreg-complement} and Theorem~\ref{no7reg}).
\end{proof}

Next, we proceed with deriving the upper bound for the $K_3$-degrees of $8$-regular $K_3$-irregular graphs.

\begin{theorem}\label{8-reg-upper-bound-k3}
    If an $8$-regular $K_3$-irregular graph exists, then for every $v \in V(G)$, it holds \[K_3 \deg(v) \leq 22.\]
\end{theorem}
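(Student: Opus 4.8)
The plan is to begin from the generic bound of Lemma~\ref{K3_upper_bound}, which for $r=8$ gives $K_3\deg(v)\le\binom{8}{2}-4=24$, and then to exclude the two top values. Let $v$ be a vertex of maximum $K_3$-degree $d$, and suppose for contradiction that $d\in\{23,24\}$. I apply the partitioning technique of Section~\ref{part-tech} to $v$, so that $|A|=8$, $|E(A)|=d$, and $|E(\overline{G[A]})|=28-d\in\{4,5\}$. By Corollary~\ref{no-iso-in-A-hat}, $\overline{G[A]}$ has no isolated vertices, and by Lemma~\ref{bounds-on-degs-in-A} the maximum degree inside $G[A]$ is $6$.

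The central computation concerns the vertices $a\in A$ with $\deg_{G[A]}(a)=6$ (equivalently $\deg_{\overline{G[A]}}(a)=1$). Let $\bar a$ denote the unique non-neighbor of such an $a$ in $A$, and let $c_a$ be the number of common neighbors in $A$ of $a$ and its unique neighbor in $B$. Counting the edges inside $N_G(a)$ — the $6$ edges from $v$, the edges among the six $A$-neighbors of $a$, and the $c_a$ edges contributed by its $B$-neighbor — yields $K_3\deg(a)=d-7+\deg_{\overline{G[A]}}(\bar a)+c_a$. Since $\deg_{\overline{G[A]}}(\bar a)\ge 1$ and $c_a\ge 0$, while maximality of $d$ forces $K_3\deg(a)\le d-1$, every such $a$ has $K_3\deg(a)\in\{d-6,\dots,d-1\}$, a set of only $6$ values. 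Hence if $\overline{G[A]}$ has more than $6$ vertices of degree $1$, two vertices of $A$ share a $K_3$-degree, contradicting irregularity. This disposes of $d=24$ at once: there $\overline{G[A]}\simeq 4K_2$ (the only loopless, isolated-vertex-free graph with $4$ edges on $8$ vertices), all $8$ of whose vertices have degree $1$. For $d=23$, a short enumeration shows $\overline{G[A]}$ is a forest on $8$ vertices with $5$ edges and no isolated vertices, hence one of $2K_2\cup P_4$, $2K_2\cup K_{1,3}$, or $K_2\cup 2P_3$; the middle one has $7$ degree-$1$ vertices and is excluded by the counting above.

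The remaining and genuinely hard cases are $2K_2\cup P_4$ and $K_2\cup 2P_3$, each with exactly $6$ vertices of degree $1$, so the bound is tight and the six degree-$6$ vertices of $A$ must realize precisely the values $\{17,\dots,22\}$. Here the $A$-side pigeonhole no longer closes, and the plan is to bring in the structure of $B$: forcing the set $\{17,\dots,22\}$ pins down the quantities $\deg_{\overline{G[A]}}(\bar a)+c_a$, and therefore the adjacencies between $A$ and $B$, which must then be reconciled with the edge budget $|E(A,B)|=r(r-1)-2d=10$ and with the requirement that the two degree-$5$ vertices of $A$ together with all of $B$ receive distinct $K_3$-degrees not exceeding $16$ (since $\{17,\dots,23\}$ are already consumed by $v$ and the degree-$6$ vertices).

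I expect this final reconciliation to be the main obstacle. One must show that no assignment of $B$-neighbors to the six degree-$6$ vertices can produce the forced profile of $c_a$-values without either violating the $A$–$B$ edge budget or creating a collision of $K_3$-degrees — whether among vertices of $A$, among vertices of $B$, or through a false-twin relation across the partition (as ruled out by Lemma~\ref{no-iso-in-B} and the twin arguments). Since global tools such as the divisibility of Corollary~\ref{triangle-shake-lemma} and the order bound of Corollary~\ref{K3_upper_bound_n} only narrow $n$ to $17\le n\le 24$ without closing these two configurations, the decisive step will be a direct structural analysis of the bipartite pattern $E(A,B)$ in each of the two surviving forest types.
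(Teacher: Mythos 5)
Your treatment of $K_3\deg(v)=24$ is correct and essentially the paper's Step~1 (the paper obtains the same six-value window $\{18,\dots,23\}$ for the eight vertices of $A$ via $K_3\deg(a)\ge\binom{7}{2}-3$ rather than via your edge count, but the pigeonhole is identical), and your elimination of $\overline{G[A]}\simeq 2K_2\cup K_{1,3}$ is a valid refinement of the same counting. The problem is that the proposal stops exactly where the real work begins: for the two surviving configurations $2K_2\cup P_4$ and $K_2\cup 2P_3$ you only announce a plan (``a direct structural analysis of the bipartite pattern $E(A,B)$'') without carrying it out, so the bound $K_3\deg(v)\le 22$ is not actually established. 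This is a genuine gap rather than a routine verification: the $A$-side pigeonhole is tight in both cases, the edge budget $|E(A,B)|=10$ by itself yields no contradiction, and your outline does not identify which collision of $K_3$-degrees is ultimately forced.

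The paper closes this case without ever enumerating the isomorphism type of $\overline{G[A]}$. Writing $W$ for the set of degree-$1$ vertices of $\overline{G[A]}$ (your six degree-$6$ vertices of $G[A]$), it observes, as you do, that $|W|=6$ and that $W$ realizes exactly the values $\{17,\dots,22\}$; it then takes the vertex $w\in W$ with $K_3\deg(w)=22$ and its \emph{unique} neighbor $b\in B$. Since only $17$ of the $22$ triangles at $w$ live inside $A\cup\{v\}$, the remaining $5$ pass through $b$, so $b$ is adjacent to $w$ and to $5$ of the $6$ vertices of $N(w)\cap A$, and at least $3$ of these lie in $W$, forming $U=W\cap N(w)\cap N(b)$. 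An inclusion--exclusion count shows each $u\in U$ has at least $3$ common neighbors with $w$ and $b$ beyond the triangle $\{u,w,b\}$ itself, whence $K_3\deg(u)\ge 17+1+3=21$. That produces five vertices ($v$, $w$, and three elements of $U$) whose $K_3$-degrees must all lie in the three-element set $\{21,22,23\}$ --- the contradiction. This step, propagating the large triangle-degree through the unique $B$-neighbor of the second-largest vertex, is the idea your proposal is missing; if you prefer to keep your enumerative route, you would need to reproduce an argument of this strength inside each of the two surviving forest cases.
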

\begin{proof}

At first, we use Lemma~\ref{K3_upper_bound} to obtain ${K_3 \deg(v) \leq 24}$. Next, we lower this bound down to $22$ in two steps.

\paragraph{\large \textit{Step 1}: $\boldsymbol{K_3 \deg(v) = 24}$ is impossible.}
Towards a contradiction, assume that there is a vertex $v$ having $K_3 \deg(v) = 24$. Consider the partitioning technique (see Section~\ref{part-tech}) with respect to $v$ in order to get $|A|=8$ and $|E(\overline{G[A]})|=4$. 
By Lemma~\ref{no-iso-in-A-hat}, there are no isolated vertices in $\overline{G[A]}$, therefore $\overline{G[A]} \simeq 4K_2$. 
Thus, every vertex $a \in A$ has 
\[K_3 \deg(a) \geq \binom{7}{2} - 3 = 18.\] 
In summary, there are only $6$ available values in the range $18 \leq K_3 \deg(a) \leq 23$, yet $|A| = 8$. By the Pigeonhole Principle, this leads to a contradiction, and so $K_3 \deg(v) \leq 23$, for all $v \in V(G)$.

\paragraph{\large \textit{Step 2}: $\boldsymbol{K_3 \deg(v) = 23}$ is impossible.}
Again, towards a contradiction, assume that there exists a vertex $v$ having $K_3 \deg(v) = 23$. Consider the partitioning technique with respect to $v$. Define the subset \[W= \{ w \in A\mid \deg_{\overline{G[A]}}(w)=1\}\] as the set of vertices in $A$, such that each vertex has a unique neighbor in the complement $\overline{G[A]}$. Since $|A|=8$ and $|E(\overline{G[A]})|=5$ (by the partition technique), and there are no isolated vertices in $\overline{G[A]}$ (by Corollary~\ref{no-iso-in-A-hat}), we conclude that $|W| \geq 6$. Indeed, assuming $|W|\leq 5$, we obtain 
\[|E(\overline{G[A]})|\geq\frac{1}{2}(|W|+2|A\setminus W|)=\frac{1}{2}(5+2\cdot 3)=\frac{11}{2}>5,\] 
which is a contradiction.

For every $w \in W$, we establish the next lower bound for its $K_3$-degree: 
\[K_{3} \deg(w)\geq K_3 \deg_{G[A\cup\{v\}]}(w) = \binom{|A|-1}{2} - (|E(\overline{G[A]})|-1) = 17.\] Since there are at least $6$ such vertices, it follows that all vertices with $K_3$-degrees ranging from $17$ to $22$ lie in $W \subseteq A$. Note that from this we can conclude that $|W|=6$.

Now, consider a vertex $w \in W$ such that $K_3 \deg(w) = 22$. The neighborhood of $w$ consists of the following elements: 
\begin{itemize}
    \item $6$ vertices of $A$ (by the definition of $W$); 
    \item the vertex $v$ (the ``anchor'' of the partition); 
    \item $1$ vertex from $B$, which we denote as $b$.
\end{itemize}
Now, we split $22$ triangles that contain $w$ into two classes:
\begin{itemize}
    \item Triangles involving only $v$ and vertices of $A$ (and not involving vertex $b$). There are $17$ such triangles as we have already shown.  
    \item Triangles containing the vertex $b \in B$. The remaining $5$ triangles must be of this kind.
\end{itemize}

Thus, $b$ must satisfy $|N(b) \cap A| \geq 6$, since it is adjacent to $w$ and to $5$ of its neighbors in $A$. Since $|A|=8$ and $|W|=6$, at least $4$ vertices from $N(b) \cap A$ must also belong to $W$, meaning that 
\[|U| \geq 3\text{, where }U = W \cap N(w) \cap N(b).\] 
Thus, there are at least $3$ vertices in $W$ adjacent to both $w$ and $b$.

Let us carefully examine the possible $K_{3}$-degrees for the vertices from $U$. Each vertex $u\in U$ has $K_3 \deg_{G[A\cup\{v\}]}(u) =17$ as $U\subseteq W$. Also, $u$ lies in a triangle induced by $\{u,w,b\}$. Further, note that the set $N(w)\cap N(b)$ can contain at most $2$ vertices that are not in $N(u)$ (the vertex $u$ itself, and possibly, its unique neighbor from $\overline{G[A]}$). Therefore, 
\begin{align*}
|N(u)\cap N(w) \cap N(b)|
&=|N(u)|+|N(w)\cap N(b)|-|N(u)\cup(N(w)\cap N(b))|\\
&\geq 8+5-(8+2) = 3. 
\end{align*}
Clearly, each element $w'$ from $N(u)\cap N(w) \cap N(b)$ gives a new triangle (induced by the triple $\{u,w',b\}$) for $u$. Thus, we have $K_3 \deg(u) \geq 17+1+3 = 21$. Since $|U|\geq 3$, we have at least $5$ vertices with $K_{3}$-degree of at least $21$ (the vertices $v$, $w$, and every vertex $u\in U$), which is a contradiction.

Therefore, each vertex $v\in V(G)$ must have $K_{3} \deg(v)\leq 22$.
\end{proof}

\begin{corollary}
    If an $8$-regular $K_3$-irregular graph exists, then it has $n \leq 22$.
\end{corollary}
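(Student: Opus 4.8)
The plan is to derive the bound directly from Theorem~\ref{8-reg-upper-bound-k3}, combined with the distinctness of $K_3$-degrees and the divisibility constraint of Corollary~\ref{triangle-shake-lemma}. First I would invoke Theorem~\ref{8-reg-upper-bound-k3} to record that every vertex satisfies $K_3 \deg(v) \leq 22$. Since $G$ is $K_3$-irregular, its $K_3$-degrees are pairwise distinct non-negative integers, all contained in the set $\{0, 1, \dots, 22\}$. As this set has only $23$ elements, the Pigeonhole Principle yields $n \leq 23$ at once.

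The only remaining work is to exclude the boundary value $n = 23$. If $n = 23$ held, the $23$ distinct $K_3$-degrees would be forced to occupy every element of $\{0, 1, \dots, 22\}$, so the $K_3$-degree sequence would be exactly $0, 1, \dots, 22$. Summing these gives $\sum_{k=0}^{22} k = \binom{23}{2} = 253$, which is congruent to $1$ modulo $3$. This contradicts Corollary~\ref{triangle-shake-lemma}, according to which the total sum of $K_3$-degrees must be divisible by $3$. Hence $n = 23$ is impossible, and we conclude $n \leq 22$.

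I expect no genuine obstacle here: the whole argument is a short counting step once Theorem~\ref{8-reg-upper-bound-k3} is in hand. The one point worth flagging is that the upper bound on triangle-degrees \emph{by itself} only yields $n \leq 23$, one larger than the claimed bound; it is precisely the divisibility condition that trims off the extremal case $n = 23$. This is the same mechanism already exploited in the $n = 16$ subcase of Theorem~\ref{no7reg}, so the technique is entirely familiar within the paper and requires no new ideas.
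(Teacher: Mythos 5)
Your proof is correct and follows exactly the paper's own argument: Theorem~\ref{8-reg-upper-bound-k3} gives $n \leq 23$, and the extremal case $n=23$ is eliminated because $\sum_{k=0}^{22}k = 253$ is not divisible by $3$, contradicting Corollary~\ref{triangle-shake-lemma}. No differences worth noting.
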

\begin{proof}
By Proposition~\ref{8-reg-upper-bound-k3} we have $0 \leq K_3 \deg(v) \leq 22$, which directly implies $n \leq 23$. Now, assume $n=23$; in this case, all integers from $0$ to $22$ must appear as distinct $K_3$-degrees. However, the total sum $\sum_{k=0}^{22} k = 253$ is not divisible by $3$. Hence, it contradicts the necessary divisibility condition from Corollary~\ref{triangle-shake-lemma}. Therefore, such a graph cannot exist.
\end{proof} 

This leads to our final bounds of this section.

\begin{corollary}
    If an $8$-regular $K_3$-irregular graph exists, its order must satisfy $17 \leq n \leq 22$ with triangle degrees lying in range $0 \leq K_3 \deg(v) \leq 22$.
\end{corollary}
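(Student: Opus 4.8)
The plan is to assemble this statement directly from the three results already established in this section, since it is a consolidation rather than a new argument. The order bounds $17 \leq n \leq 22$ are obtained by pairing the lower bound from the Proposition asserting $n \geq 17$ with the upper bound $n \leq 22$ from the preceding Corollary. The triangle-degree bounds follow analogously: the inequality $K_3 \deg(v) \leq 22$ is exactly the conclusion of Theorem~\ref{8-reg-upper-bound-k3}, while the matching lower bound $K_3 \deg(v) \geq 0$ is immediate, as every $K_3$-degree counts a number of triangles and is therefore a nonnegative integer.

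Concretely, I would phrase the proof as a one-line synthesis: invoke the Proposition and the Corollary for the range of $n$, and invoke Theorem~\ref{8-reg-upper-bound-k3} together with the trivial nonnegativity of $K_3 \deg$ for the range of the triangle-degrees. No case analysis or further structural reasoning is needed here, because all the work was carried out in proving the lower bound on $n$ (via Corollary~\ref{reg-k3-irreg-complement} and Theorem~\ref{no7reg} ruling out $n=16$) and the upper bound on $K_3 \deg(v)$ (via the two-step Pigeonhole argument in Theorem~\ref{8-reg-upper-bound-k3}, supplemented by the divisibility condition of Corollary~\ref{triangle-shake-lemma} eliminating $n=23$).

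There is no genuine obstacle at this stage; the only thing to be careful about is citing each prior bound in the correct direction and noting explicitly that the lower bound on the $K_3$-degree requires no argument beyond nonnegativity. The substantive difficulty has already been discharged upstream, so this final step is purely a matter of collecting the constraints into a single statement.
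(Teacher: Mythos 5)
Your proposal is correct and matches the paper exactly: the paper presents this corollary without proof as a direct consolidation of the Proposition giving $n \geq 17$, Theorem~\ref{8-reg-upper-bound-k3} giving $K_3 \deg(v) \leq 22$, the preceding Corollary giving $n \leq 22$, and the trivial nonnegativity of $K_3$-degrees. No further argument is needed.
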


Using an heuristic search, the best we found for the regularity $r=8$ are the $n$-vertex graphs with exactly $2$ equal pairs of $K_3$-degrees for $n \in \{19,20,21,22\}$. We give an example of one of these graphs in Figure~\ref{close-8-reg-k3-irreg-figure}.

\begin{figure}[ht]
\centering
\begin{tikzpicture}[scale=1, every node/.style={circle, draw, fill=blue!10, minimum size=5.5mm, inner sep=0pt}]
    \def\labels{{17,5,4,2,12,7,15,15,18,13,7,9,8,10,6,11,16,3,14}}
    
	\foreach \i in {0,...,18} {
        \pgfmathparse{\labels[\i]}
        \let\labelval\pgfmathresult

        \ifthenelse{\labelval = 7 \OR \labelval = 15}
        {
		\node[draw=red, thick] (\i) at ({360/19 * \i}:6) {\textbf{\labelval}};
        }
        {
        \node (\i) at ({360/19 * \i}:6) {\labelval};
        }
	}
	
	\foreach \i/\neighbors in {
		0/{1,4,5,13,14,15,16,18},
        1/{0,4,10,11,12,14,17,18},
        2/{4,5,8,10,11,12,14,17},
        3/{4,5,10,11,12,14,17,18},
        4/{0,1,2,3,13,15,16,18},
        5/{0,2,3,6,7,8,15,16},
        6/{5,7,8,9,10,11,12,13},
        7/{5,6,8,9,10,11,12,17},
        8/{2,5,6,7,9,10,11,12},
        9/{6,7,8,10,11,12,14,17},
        10/{1,2,3,6,7,8,9,15},
        11/{1,2,3,6,7,8,9,18},
        12/{1,2,3,6,7,8,9,13},
        13/{0,4,6,12,14,16,17,18},
        14/{0,1,2,3,9,13,15,16},
        15/{0,4,5,10,14,16,17,18},
        16/{0,4,5,13,14,15,17,18},
        17/{1,2,3,7,9,13,15,16},
        18/{0,1,3,4,11,13,15,16}}
	{
		\foreach \j [evaluate=\j as \jnum using int(\j)] in \neighbors {
			\ifnum\i<\j
			\draw (\i) -- (\j);
			\fi
		}
	}
\end{tikzpicture}
\caption{An $8$-regular graph of order $19$. Two pairs of its vertices have equal $K_3$-degrees ($7$ and $15$). Vertex labels correspond to their $K_3$-degrees.}\label{close-8-reg-k3-irreg-figure}
\end{figure}
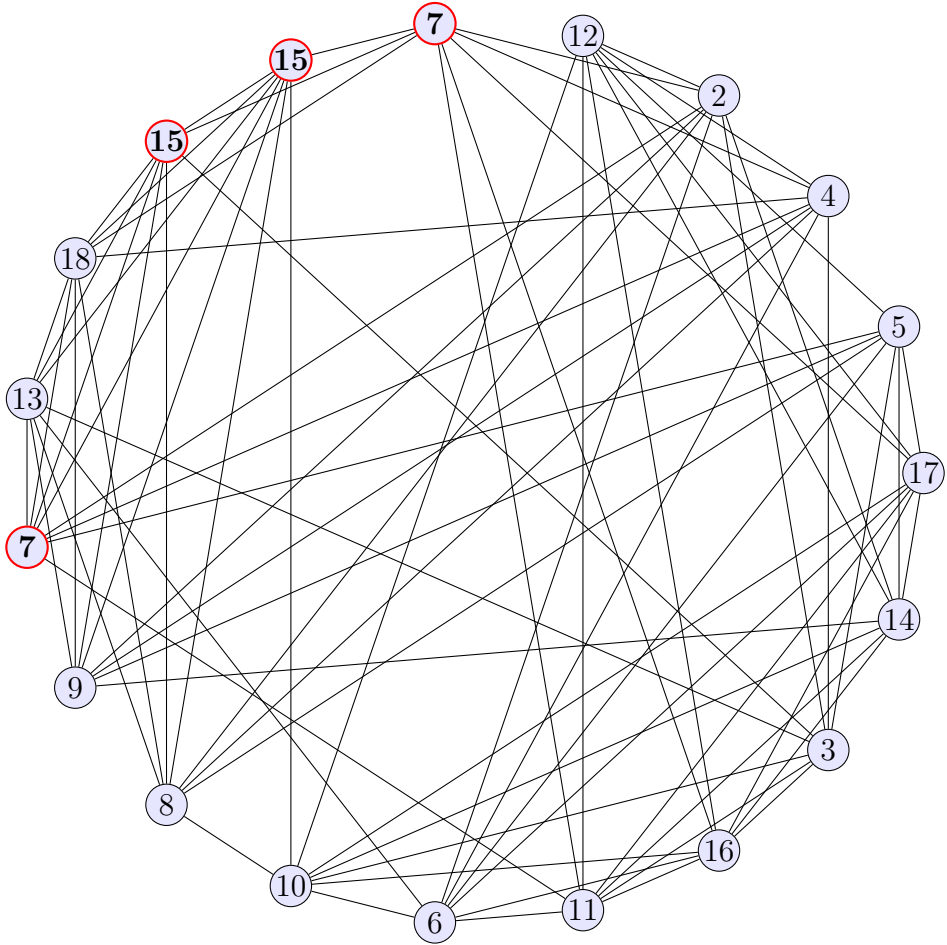

We believe that these bounds can be improved with some deeper analysis of the structure of such graphs, and hence, make the following conjecture.

\begin{conjecture}
    There are no $8$-regular $K_3$-irregular graphs.
\end{conjecture}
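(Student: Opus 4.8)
The plan is to push the case analysis that already settled $r \le 7$ through all six surviving orders $n \in \{17,\dots,22\}$, combining three essentially independent families of constraints: the two-sided bounds on the $K_3$-degree range, the mod-$3$ divisibility of the degree sum (Corollary~\ref{triangle-shake-lemma}), and complement duality (Corollary~\ref{reg-k3-irreg-complement} together with Proposition~\ref{K3-complement}). First I would pin down the admissible degree multisets for each $n$. Since the $n$ $K_3$-degrees are distinct integers in $[0,22]$, exactly $23-n$ values are absent, and these missing values must sum to $253 \equiv 1 \pmod 3$ in order that the present values sum to a multiple of $3$. Writing $M$ and $m$ for the extreme $K_3$-degrees, the counting bound $M - m \ge n-1$ together with Theorem~\ref{8-reg-upper-bound-k3} ($M \le 22$) and Lemma~\ref{K3_lower_bound} forces $M$ close to $22$ and $m$ close to $0$. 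For $n=22$, for instance, only $m=0$ with $M \in \{21,22\}$ survives, and the divisibility condition then reduces the candidate degree sets to the short explicit list $\{0,\dots,21\}$ and $\{0,\dots,22\}\setminus\{k\}$ with $k \in \{1,4,7,10,13,16,19\}$; the pattern $\{1,\dots,22\}$ is eliminated outright because its sum $253 \equiv 1 \pmod 3$. The same bookkeeping produces a finite candidate list for each of the remaining orders.

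Next, for each surviving pattern I would apply the partitioning technique at the extremal vertices, exactly as in the proof of Theorem~\ref{8-reg-upper-bound-k3}. Anchoring at a vertex $v$ with $K_3\deg(v)=M$ forces $A=N(v)$ to be dense, since $|E(\overline{G[A]})| = \binom{8}{2} - M$ is small; by Corollary~\ref{no-iso-in-A-hat} the graph $\overline{G[A]}$ has no isolated vertices, which pins its isomorphism type to one of a handful of sparse graphs and forces every vertex of $A$ to have a large $K_3$-degree, a Pigeonhole squeeze at the top of the range. Dually, whenever $m=0$, anchoring at a vertex $w$ with $K_3\deg(w)=0$ makes $N(w)$ independent and $G[B]$ sparse, bounding the $K_3$-degrees of the $8$ vertices of $N(w)$ from above and again cramming too many vertices into a short range of values, as in the $n=16$ analysis of Theorem~\ref{no7reg}. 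The complement supplies a second, independent squeeze: by Proposition~\ref{K3-complement} the $K_3$-degree set of $\overline G$ is $\{c(n)-d\}$ with $c(n) = \binom{n-1}{2} - 12(n-9)$, and applying Lemma~\ref{K3_upper_bound} to the $(n-9)$-regular graph $\overline G$ (and, for $n=17$, Theorem~\ref{8-reg-upper-bound-k3} itself, since there $\overline G$ is again $8$-regular) tightens the range from both ends; for $n=17$ this already forces every $K_3$-degree into $[2,22]$.

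The genuinely hard cases will be the smaller orders $n \in \{17,18,19\}$, where the window $[m,M]$ is wide relative to $n$ and no single vertex is forced to have a $K_3$-degree extreme enough to make $G[A]$ near-complete or near-empty; there the local extremal arguments lose their bite. My intended remedy is to track, for a vertex of prescribed $K_3$-degree, the forced distribution of its incident edges between $A$ and $B$ and to confront these with the prescribed degrees of its neighbors along the whole staircase of distinct values, deriving a local incompatibility vertex by vertex. A global double count of $\sum_v K_3\deg(v) = 3T$ can be carried alongside, but the naive edge bound (each edge lies in at most $r-1 = 7$ triangles) is too weak to constrain $T$ usefully, so it will have to be sharpened or combined with the edge-$K_3$-degree relations of Nair and Vijayakumar.

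The main obstacle, and the reason the statement is phrased as a conjecture rather than a theorem, is precisely this middle regime. The necessary conditions assembled above --- range, divisibility, duality, and local Pigeonhole --- prune the search enormously but do not obviously close it, while an exhaustive computer search over all $8$-regular graphs on up to $22$ vertices is infeasible. I therefore expect that a complete proof will require either a new structural invariant that directly forbids a full or near-full staircase $K_3$-degree sequence under $8$-regularity, or a quantitatively sharper lower bound on $m$ (equivalently a sharper upper bound on $M$ than the value $22$ obtained in Theorem~\ref{8-reg-upper-bound-k3}) that collapses $M - m$ below $n-1$ and thereby contradicts $K_3$-irregularity for every remaining order at once.
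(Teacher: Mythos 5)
The statement you are addressing is posed in the paper only as a conjecture: the authors explicitly do not prove it, and the content of their section on $8$-regular graphs is precisely the set of partial results you reproduce in your first two paragraphs --- the bound $K_3\deg(v)\le 22$ (obtained, as you describe, by anchoring the partitioning technique at a vertex of maximal $K_3$-degree and running a pigeonhole squeeze inside $A$), the elimination of $n=23$ by the mod-$3$ divisibility of $\sum_v K_3\deg(v)$, and the lower bound $n\ge 17$ via complement duality with the $r=7$ non-existence theorem. Up to that point your proposal is sound, and your arithmetic (e.g.\ the forced window $[2,22]$ for $n=17$, where $\overline{G}$ is again $8$-regular) checks out, but it does not go beyond what the paper already establishes.

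The genuine gap is the one you yourself flag: for every surviving order $n\in\{17,\dots,22\}$ there remain admissible degree multisets that none of the assembled necessary conditions (range, divisibility, duality, local pigeonhole at the extremes) rule out, and the step you propose to close them --- tracking the forced split of each vertex's edges between $A$ and $B$ and confronting it with the prescribed degrees of its neighbors along the staircase --- is a direction, not an argument; nothing in the proposal shows that this confrontation terminates in a contradiction. Moreover, the tools you invoke degrade exactly where you need them: for $n=22$ Lemma~\ref{K3_lower_bound} gives a negative (hence vacuous) bound on $m$, and the dual squeeze through $\overline{G}$ (which is $13$-regular there) yields $K_3\deg_G(v)\ge 54-71$, i.e.\ nothing, so the bottom of the range is entirely unconstrained. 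As written, this is an honest and reasonable research plan for an open problem, consistent with the paper's own assessment that the bounds "can be improved with some deeper analysis," but it is not a proof, and it should not be presented as settling the conjecture.
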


\section{Regularity 9 and beyond}\label{sect-9-reg-plus}

In this section, we discuss the experimental findings of our work. While the next section provides technical details of the algorithm, here we focus solely on the discovered graphs.

The smallest regularity for which we successfully found a regular $K_3$-irregular graph is $r=9$. One such graph is shown in Figure~\ref{9-reg-k3-irreg-figure}, with its adjacency lists representation given in Table~\ref{tab:9reg-k3-degrees-neighbors}.

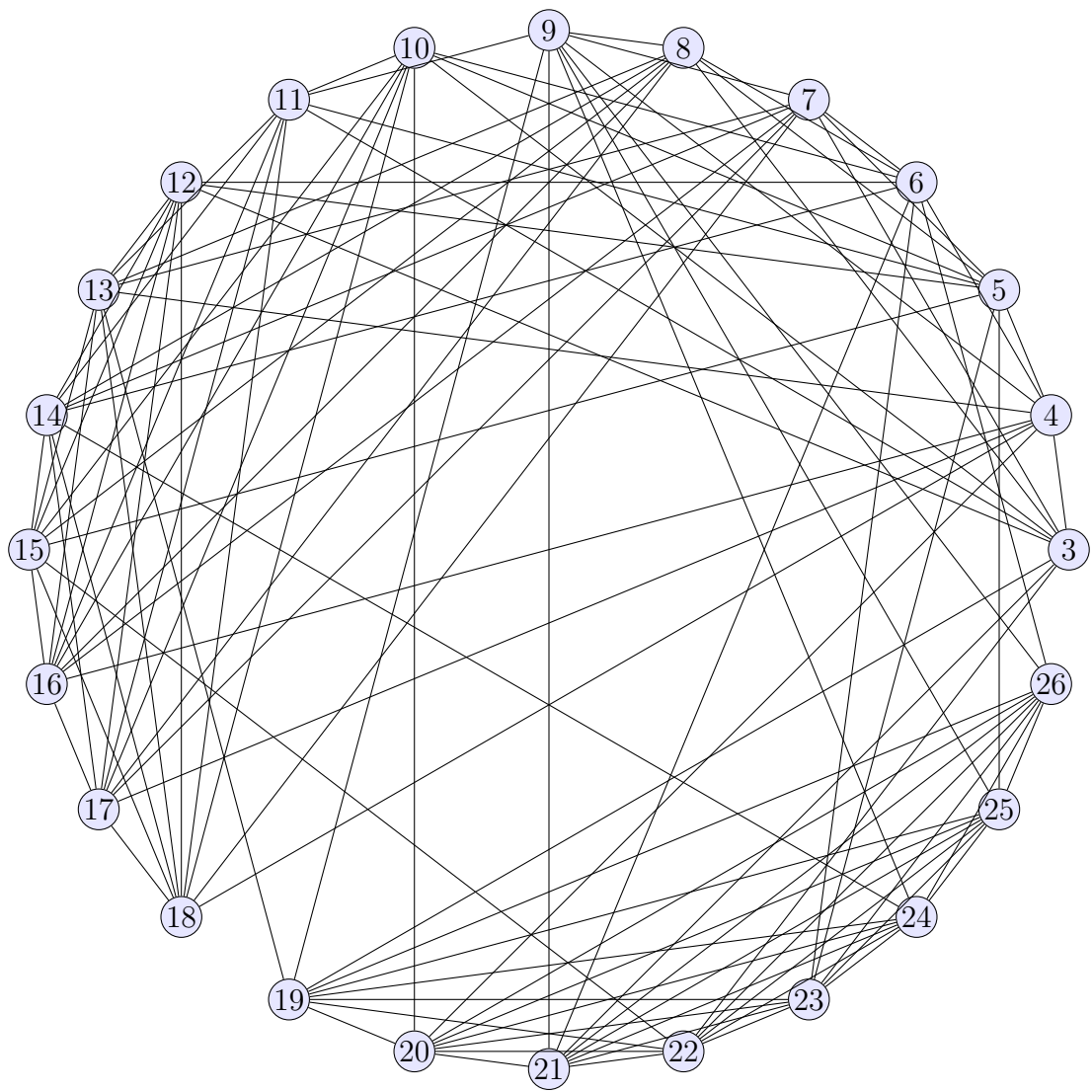
\begin{figure}
	\centering
\begin{tikzpicture}[scale=1, every node/.style={circle, draw, fill=blue!10, minimum size=5.5mm, inner sep=0pt}]
	\foreach \i in {3,...,26} {
		\node (\i) at ({-45 + 360/24 * \i}:7) {\i};
	}
	
	\foreach \i/\neighbors in {
		20/{23,24,19,22,26,21,25,10,4},
		23/{20,24,19,22,26,21,25,6,5},
		24/{20,23,19,22,26,21,25,14,9},
		19/{20,23,24,22,26,25,3,13,9},
		22/{20,23,24,19,26,21,25,3,15},
		26/{20,23,24,19,22,21,25,6,9},
		21/{20,23,24,22,26,25,3,6,9},
		25/{20,23,24,19,22,26,21,5,9},
		3/{19,22,21,10,4,11,7,12,8},
		6/{23,26,21,10,4,14,7,12,8},
		5/{23,25,10,4,15,11,7,12,8},
		10/{20,3,6,5,15,18,11,16,17},
		4/{20,3,6,5,18,16,17,13,9},
		14/{24,6,15,18,11,17,7,12,8},
		15/{22,5,10,14,18,16,13,12,8},
		18/{10,4,14,15,11,17,13,7,12},
		11/{3,5,10,14,18,16,17,13,9},
		16/{10,4,15,11,17,13,7,12,8},
		17/{10,4,14,18,11,16,7,12,8},
		13/{19,4,15,18,11,16,7,12,8},
		7/{3,6,5,14,18,16,17,13,9},
		12/{3,6,5,14,15,18,16,17,13},
		8/{3,6,5,14,15,16,17,13,9},
		9/{24,19,26,21,25,4,11,7,8}}
	{
		\foreach \j [evaluate=\j as \jnum using int(\j)] in \neighbors {
			\ifnum\i<\j
			\draw (\i) -- (\j);
			\fi
		}
	}
\end{tikzpicture}
	\caption{A discovered $9$-regular $K_3$-irregular graph of order $24$. Vertex labels correspond to their $K_3$-degrees.}
    \label{9-reg-k3-irreg-figure}
\end{figure}

\begin{table}[!ht]
	\centering
    \renewcommand{\arraystretch}{1.0}
	\begin{tabular}[t]{| c | l |}
		\hline
		Vertex $v$ & Neighbors $N(v)$ \\
        \hline
        3  & \{19, 22, 21, 10, 4, 11, 7, 12, 8\} \\
        \hline
        4  & \{20, 3, 6, 5, 18, 16, 17, 13, 9\} \\
        \hline
        5  & \{23, 25, 10, 4, 15, 11, 7, 12, 8\} \\
        \hline
        6  & \{23, 26, 21, 10, 4, 14, 7, 12, 8\} \\
        \hline
        7  & \{3, 6, 5, 14, 18, 16, 17, 13, 9\} \\
        \hline
        8  & \{3, 6, 5, 14, 15, 16, 17, 13, 9\} \\
        \hline
        9  & \{24, 19, 26, 21, 25, 4, 11, 7, 8\} \\
        \hline
        10 & \{20, 3, 6, 5, 15, 18, 11, 16, 17\} \\
        \hline
        11 & \{3, 5, 10, 14, 18, 16, 17, 13, 9\} \\
        \hline
        12 & \{3, 6, 5, 14, 15, 18, 16, 17, 13\} \\
        \hline
        13 & \{19, 4, 15, 18, 11, 16, 7, 12, 8\} \\
        \hline
        14 & \{24, 6, 15, 18, 11, 17, 7, 12, 8\} \\
        \hline
        15 & \{22, 5, 10, 14, 18, 16, 13, 12, 8\} \\
        \hline
        16 & \{10, 4, 15, 11, 17, 13, 7, 12, 8\} \\
        \hline
        17 & \{10, 4, 14, 18, 11, 16, 7, 12, 8\} \\
        \hline
        18 & \{10, 4, 14, 15, 11, 17, 13, 7, 12\} \\
        \hline
        19 & \{20, 23, 24, 22, 26, 25, 3, 13, 9\} \\
        \hline
        20 & \{23, 24, 19, 22, 26, 21, 25, 10, 4\} \\
        \hline
        21 & \{20, 23, 24, 22, 26, 25, 3, 6, 9\} \\
        \hline
        22 & \{20, 23, 24, 19, 26, 21, 25, 3, 15\} \\
        \hline
        23 & \{20, 24, 19, 22, 26, 21, 25, 6, 5\} \\
        \hline
        24 & \{20, 23, 19, 22, 26, 21, 25, 14, 9\} \\
        \hline
        25 & \{20, 23, 24, 19, 22, 26, 21, 5, 9\} \\
        \hline
        26 & \{20, 23, 24, 19, 22, 21, 25, 6, 9\} \\
        \hline
	\end{tabular}
    \caption{Triangle-degrees and neighborhoods of graph vertices from Figure~\ref{9-reg-k3-irreg-figure}.}
    \label{tab:9reg-k3-degrees-neighbors}
\end{table}

It is worth mentioning that there exist regular $K_3$-irregular graphs that have the same order, regularity, and identical sets of $K_3$ degrees, yet, surprisingly, are not isomorphic. Such graphs differ only by just a few edges. For instance, performing a single $2$-edge switch
\[
 \{20,5\}, \{18,8\} \longrightarrow \{20,18\},\{5,8\}
\]
on the graph in Figure~\ref{9-reg-k3-irreg-figure} produces a non-isomorphic graph with the same properties. In this example, even each vertex index still correspond to the same $K_3$-degrees as in the original graph.

The largest regularity for which we found an $r$-regular $K_3$-irregular graph is $r=30$. Based on our observations, such graphs seem to exist for all regularities $r \geq 9$. We halted our search at $r=30$ only due to limits of computational resources.


\section{Evolutionary search algorithm}\label{sect-algorithm}

In this section, we describe the algorithm we used to search for regular $K_3$-irregular graphs. Our method is based on an evolutionary heuristic: it maintains a population of candidate $r$-regular graphs and evolves them using mutation and selection, guided by a fitness function that promotes irregularity in triangle-degrees.

Although inspired by the general principles of evolutionary algorithms, our approach omits crossover operations due to the difficulty of merging two graphs without violating regularity constraint. Instead, it relies on randomized mutations and elitist selection to progressively improve candidate graphs over generations.

The algorithm is implemented in C\texttt{++}, using the \textit{Boost Graph Lib}~\cite{BoostGraphLib} for graph representation and manipulation. All the discovered graphs, along with the source code, are publicly available at \href{https://doi.org/10.5281/zenodo.16410600}{doi.org/10.5281/zenodo.16410600}.

\subsection{Main algorithm loop}

Full algorithm loop goes as follows:
\begin{enumerate}
    \item \textbf{Initialization:} Generate an initial population of graphs and evaluate their fitness.
    \item \textbf{Stopping criterion:} If a regular $K_3$-irregular graph is found, terminate the algorithm; otherwise, proceed to the next step.
    \item \textbf{Multi-offspring mutation:} For each individual in the current population, generate $m$ mutated offspring by independently applying mutation. 
    \item \textbf{Fitness evaluation:} Compute the $K_3$-degree of every vertex and evaluate the fitness of each candidate graph.
    \item \textbf{Selection:} From the enlarged pool of $mN$ candidates (optionally including some individuals from the previous generation), select the best $N$ individuals to form the next generation. The process then continues from step~2.
\end{enumerate}

\textbf{Note on population size.} Unlike classical evolutionary algorithms with fixed-size populations, our approach allows the population size to vary dynamically across phases. Each generation starts with $N$ graphs. Then, for every individual, we create $m$ copies and apply mutations independently to each of them. As a result, the mutation phase produces a temporary population of $mN$ candidates in total. In our experiments, constrained by computational resources, we typically used $N \approx 100$--$200$ and $m \approx 70$. The selection phase then reduces the expanded pool back to size $N$ by choosing the best individuals according to the fitness function. This dynamic population model enables aggressive exploration while maintaining a manageable population size across generations.

Now, we describe each part of the algorithm in more detail. The order of the following subsections is chosen to reflect the dependencies between the design of the phases (for example, the choice of the mutation strategy defines how the initial population is generated).

\subsection{Fitness function}

Initially, we designed the fitness function as a sum of two components
\[\text{fitness}(G) = f_1(G) + f_2(G).\]
Here:
\begin{itemize}
\item $f_1(G)$ measures how close the graph is to being regular --- it calculates the proportion of vertices having the same degree;
\item $f_2(G)$ measures graph $K_3$-irregularity --- this component penalizes repeated triangle-degrees by counting the number of equal pairs in the multiset of $K_3$-degrees.
\end{itemize}

However, this approach suffered from inefficiency: many mutation steps drifted the graph too far from regularity. To overcome this issue, we updated the algorithm to preserve the regularity condition under mutations and to optimize only the $K_3$-irregularity part. This not only reduced the search space, but also led to successful discoveries of such graphs.

Let $M : \mathbb{N} \rightarrow \mathbb{N}_0$ be the multiplicity function of the multiset of triangle-degrees, i.e., $M(d) = |\{v \in V(G) \mid K_3\deg(v) = d\}|$.
Then the number of (unordered) pairs of vertices sharing the same triangle-degree is \[P = \sum_{\substack{d \in \mathbb{N}, \ M(d) > 1}} \binom{M(d)}{2}.\]
After this change, the fitness function is defined as follows:
\[\text{fitness}(G) = \frac{100}{P + 1}.\] This function reaches its maximum value of $100$ when all triangle-degrees are distinct, that is, when $G$ is $K_3$-irregular.

\subsection{Mutation}

Our mutation strategy evolved together with the fitness function.

\textbf{Preliminary mutations.} At the early stages, when regularity was not fixed, we used the following structural operations:
\begin{itemize}
    \item Addition of pendant paths (leaf attachments);
    \item Edge subdivision;
    \item Connecting non-adjacent vertices;
    \item Removal of vertices and edges under the constraint that the resulting graph remains connected.
\end{itemize}
However, these mutations often disrupted the degree balance, and the resulting graphs could not converge towards being regular.

\textbf{Mutations under fixed regularity.}
In order to restrict the search space to regular graphs, we used mutation operators that preserve regularity.

For even $r$, we introduced the following vertex addition scheme:
\begin{enumerate}
\item Let $G$ be graph of even regularity $r$.
\item Let $M = \{e_1, \dots, e_{\frac{r}{2}}\}$ be a matching (in other words, $|V(M)| = r$).
\item Remove the edges in $M$ from $G$.
\item Add a new vertex $v$ and connect it to all endpoints of edges in $M$ to obtain a new graph $G'$. Formally, we have
\begin{align*}
V(G') &= V(G) \sqcup \{v\},\\
E(G') &= (E(G) \setminus M) \cup \{ vm\mid m \in V(M)\}.
\end{align*}
It is clear that the graph $G'$ has one more vertex than $G$, while keeping the same regularity $r$.
\end{enumerate}

\begin{figure}[ht]
    \begin{tikzpicture}[auto,on grid, state/.style ={circle, top color =black , bottom color = black , draw, minimum width=1mm}, inner sep=1.5pt, scale = 1]
			
			\node[state, label={[yshift=5pt]above:$v$}] (1) at (0, 4) {};
			
			\draw[line width=0.5mm] (0, 0) ellipse (3cm and 2.5cm);
			
			\node[state] (3) at (-0.5, -1.5) {};
			\node[state] (4) at (0.5, -0.5) {};
			
			\node[state] (5) at (-2, -0.25) {};
			\node[state] (6) at (-1, 1.25) {};
			
			\node[state] (7) at (2, 0.5) {};
			\node[state] (8) at (1, 1.5) {};
			
			\draw (3) -- (4);
			\draw (5) -- (6);
			\draw (7) -- (8);
			
			\node[scale=2] at (5, 0) {$\implies$};
			
			\node[state, label={[yshift=5pt]above:$v$}] (2) at (10, 4) {};
			
			\draw[line width=0.5mm] (10, 0) ellipse (3cm and 2.5cm);
			
			\node[state] (9) at (9.5, -1.5) {};
			\node[state] (10) at (10.5, -0.5) {};
			
			\node[state] (11) at (8, -0.25) {};
			\node[state] (12) at (9, 1.25) {};
			
			\node[state] (13) at (12, 0.5) {};
			\node[state] (14) at (11, 1.5) {};
			
			\draw[dashed, red] (9) -- (10);
			\draw[dashed, red] (11) -- (12);
			\draw[dashed, red] (13) -- (14);
			
			\draw (2) -- (9);
			\draw (2) -- (10);
			\draw (2) to[out=210, in=90] (11);
			\draw (2) -- (12);
			\draw (2) to[out=340, in=90] (13);
			\draw (2) -- (14);
			
    \end{tikzpicture}
    \caption{Mutation operator: adding new vertex, even regularity (example for $r=6$).}
\end{figure}
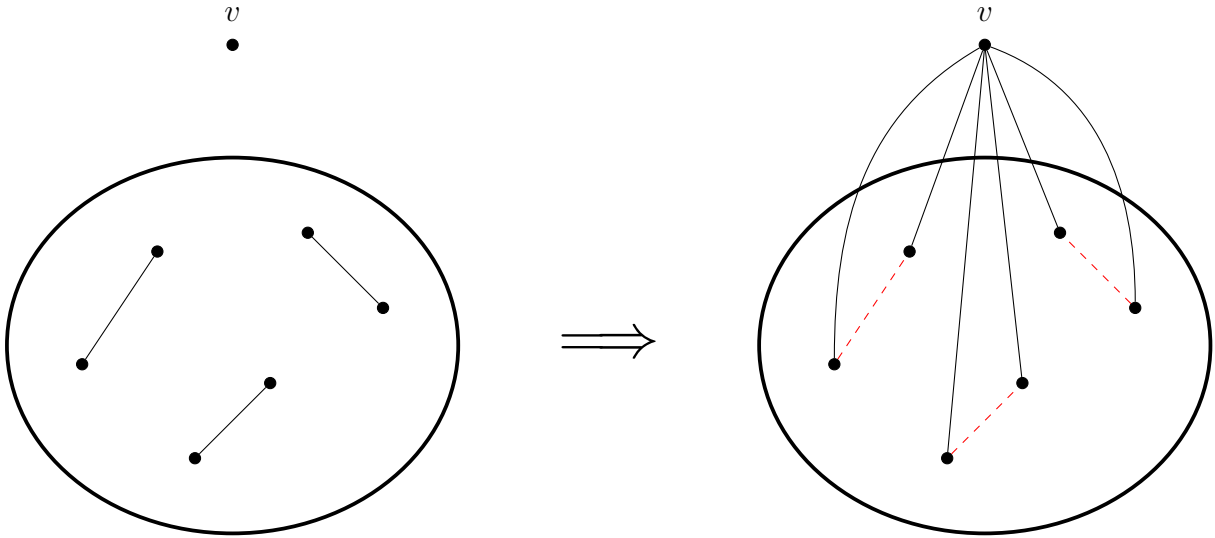

We also introduce a vertex removal operation that preserves regularity. A vertex $v$ can be removed from an $r$-regular graph $G$ if its neighborhood $N(v)$ can be partitioned into $\frac{r}{2}$ non-adjacent pairs. Thus, we remove $v$ along with all incident edges, and for each pair $\{u_i, u_j\}$ in the partitioning, we insert an edge between $u_i$ and $u_j$. The resulting graph remains $r$-regular. This mutation is particularly valuable because it enables a controlled reduction of the graph size while maintaining regularity. 

Consequently, the algorithm can freely move up and down in graph order, dynamically adjusting the number of vertices as needed. This flexibility helps avoid local optima and improves the exploration of the search space.

Graphs of odd regularities must have an even number of vertices, which prevents us from directly applying the even-regularity vertex addition scheme. However, a slight adaptation resolves this:

\begin{enumerate}
    \item Let $G$ be a graph of odd regularity $r$. 
    \item Add two new vertices $v$ and $w$, and connect them with an edge.
    \item Each of these vertices now requires $r-1$ additional neighbors to have degree $r$.
    \item Next, we proceed as in the even-regularity case. Select $r-1$ edges whose endpoints are all distinct, remove them, and connect each of the selected endpoints to $v$. Then, repeat the same for $w$ accordingly.
\end{enumerate}

This adjustment allows us to preserve the odd regularities under vertex addition. The removal is performed analogously: we look for two adjacent vertices $v$, $w$ such that their neighborhood (excluding the edge $(v,w)$) can be partitioned into non-adjacent pairs. We then remove both $v$ and $w$ and connect each pair.

\begin{figure}[ht]
    \begin{tikzpicture}[auto,on grid, state/.style ={circle, top color =black , bottom color = black , draw, minimum width=1mm}, inner sep=1.5pt, scale = 1]
			
			\node[state, label={[yshift=5pt]above:$v$}] (1) at (-1, 4) {};
			\node[state, label={[yshift=5pt]above:$w$}] (21) at (1, 4) {};
			
			\draw[line width=0.5mm] (0, 0) ellipse (3cm and 2.5cm);
			
			\node[state] (3) at (-0.5, -1.5) {};
			\node[state] (4) at (0.5, -0.5) {};
			
			\node[state] (5) at (-2, -0.25) {};
			\node[state] (6) at (-1, 1.25) {};
			
			\node[state] (7) at (2, 0.5) {};
			\node[state] (8) at (1, 1.5) {};
			
			\draw (1) -- (21);
			\draw (3) -- (4);
			\draw (5) -- (6);
			\draw (7) -- (8);

			\node[scale=2] at (5, 0) {$\implies$};
			
			\node[state, label={[yshift=5pt]above:$v$}] (2) at (9, 4) {};
			\node[state, label={[yshift=5pt]above:$w$}] (22) at (11, 4) {};
			
			\draw[line width=0.5mm] (10, 0) ellipse (3cm and 2.5cm);
			
			\node[state] (9) at (9.5, -1.5) {};
			\node[state] (10) at (10.5, -0.5) {};
			
			\node[state] (11) at (8, -0.25) {};
			\node[state] (12) at (9, 1.25) {};
			
			\node[state] (13) at (12, 0.5) {};
			\node[state] (14) at (11, 1.5) {};
			
			\draw[dashed, red] (9) -- (10);
			\draw[dashed, red] (11) -- (12);
			\draw[dashed, red] (13) -- (14);
			
			\draw (2) -- (22);
			
			\draw (2) -- (9);
			\draw (2) -- (10);
			\draw (2) to[out=210, in=90] (11);
			\draw (2) -- (12);
			\draw (2) to[out=340, in=90] (13);
			\draw (2) -- (14);
			
			\draw (22) -- (9);
			\draw (22) -- (10);
			\draw (22) to[out=210, in=80] (11);
			\draw (22) -- (12);
			\draw (22) -- (13);
			\draw (22) -- (14);
			
    \end{tikzpicture}
    \caption{Mutation operator: adding 2 new vertices, odd regularity. Example for $r=7$.}
\end{figure}
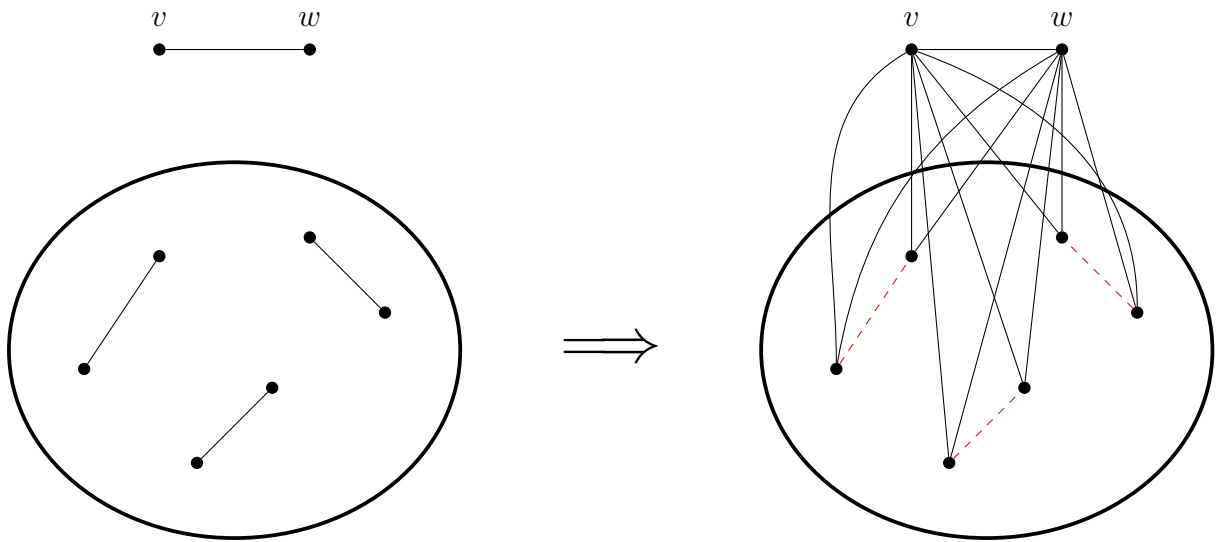

This regularity-preserving mutation proved to be effective for generating regular $K_3$-irregular graphs of regularities $r \geq 14$, but was less effective for the smaller values of $r$.

\textbf{Edge switching.}
In order to find regular $K_3$-irregular graphs for regularities in range $9 \leq r \leq 13$, we used a local mutation based on edge switching. Initial experiments with vertex addition and removal (while preserving regularity) were used to estimate suitable values of $n$ for each fixed $r$. Once a plausible range of graph orders was identified, we fixed both $r$ and $n$ and continued using only the edge-switching mutation.

The edge-switching mutation proceeds as follows:
\begin{itemize}
    \item Randomly select two edges $u_1 v_1$ and $u_2 v_2$, such that all four vertices are distinct and that neither $u_1 u_2$ nor $v_1 v_2$ already exist in the graph.
    \item Remove the original edges and add $u_1 u_2$ and $v_1 v_2 $ instead to obtain a new graph $G'$. Formally, we have
    \begin{align*}
        V(G') &= V(G),\\
        E(G') &= (E(G) \setminus \{u_1 v_1, u_2 v_2\}) \cup \{ u_1 u_2, v_1 v_2\}.
    \end{align*}
    It is clear that the graph $G'$ has the same order and regularity as $G$.
\end{itemize}

This local mutation was sufficient to produce $K_3$-irregular graphs even for small values of $r$, starting from $r=9$.

We stress that two types of mutation should be used in combination, especially when searching for regular $K_3$-irregular graphs of large regularities. We recommend initially allowing both vertex addition/removal and edge-switching mutations. This setup enables the  algorithm to explore a feasible range of graph orders $n$ for a fixed regularity $r$, while also promoting gradual improvements in triangle-degree irregularity. Once a promising interval of $n$ values has been identified, the search continues with multiple runs using only edge switching mutation, each performed at a fixed $n$ within that range. 

While in this work we use a static two-phase approach with separate runs, it would be natural to investigate dynamic change of mutation strategy in future versions of the algorithm.

\subsection{Initial population}

In the early experiments, we initialized the population with the complete graph $K_{r+1}$, which is trivially $r$-regular. Since the vertex adding/removing mutation preserves regularity and changes the order, this approach was sufficient.

However, when the order $n$ is also fixed (when only edge-switching mutation is used), we require a graph with the given $r$ and $n$ as the initial seed. To generate such graphs, we adopted an approach inspired by the Watts–-Strogatz small-world model~\cite{watts_collective_1998}: the $n$ vertices are arranged in a `circle', and connect each vertex to its $k$ nearest neighbors in a clockwise sense. This produces a regular ring lattice. To increase structural diversity, we then apply several edge switches without calculating fitness. This produces an initial population of different `random' regular graphs of the given regularity and order.

\subsection{Selection}

We experimented with several standard selection strategies, including: Elitist Selection (Elitism), Roulette Wheel Selection (RWS), Stochastic Universal Sampling (SUS), Tournament Selection. Among these, only elitist selection yielded successful results in our experiments. As previously described, the population size varies dynamically during each iteration of the algorithm. Specifically, we start with a population of size $N$, and after applying mutations, the number of individuals grows to $mN$, where $m$ is the mutation factor. The goal of the selection step is then to reduce the population size back to $N$. This is done by elitist selection of the best $N$ individuals from the pool of $mN$ candidates.

To further improve performance, we introduced one more modification: we directly carry over approximately the best $15$\% of individuals from the pre-mutation population into the pool of candidates for selection. This strategy guarantees that the best solutions found so far are preserved across generations. After that, we perform elitist selection as before, choosing the top $N$ individuals from the combined set of approximately $0.15N + mN$ candidates.

\section{Open questions}\label{sect-4}
	
	In this section, we present several open questions about regular $K_3$-irregular graphs for further research.
	
	\textbf{Question 1.} Does there exist a $8$-regular $K_3$-irregular graph? We think that the answer to this question is ``no''.
			
	\textbf{Question 2.} Is the number of triangles in a regular $K_3$-irregular graph is always greater than the number of edges? All our found graphs satisfy this property.
    
    \textbf{Question 3.} Find a general (preferably, inductive) construction of $r$-regular $K_3$-irregular graphs for all $r \geq 9$. In particular, does there exist a graph operation which from a given $r$-regular $K_3$-irregular graph of order $n$ produces another $r'$-regular $K_3$-irregular graph of order $n'$ for $r'\in\{r,r+1\}$ and $n'\in\{n,n+1\}$.

    \textbf{Question 4.} For which $n,r\in\mathbb{N}$ there exist an $r$-regular $K_3$-irregular graph of order $n$?

	\section*{Acknowledgments} 
	The authors are deeply grateful to the Ukrainian Armed Forces for keeping Leliukhivka and Kyiv safe, which gave us the opportunity to work on this paper. We also thank Vyacheslav Boyko for his valuable suggestions, which helped improve the clarity of the paper, and Vladyslav Haponenko for proofreading the manuscript at earlier stages.

\end{document}